\theoremstyle{plain}
\newtheorem{theorem}{Theorem}[section]
\newtheorem{prop}[theorem]{Proposition}
\newtheorem{cor}{Corollary}[theorem]
\theoremstyle{definition}
\newtheorem{definition}{Definition}[section]
\newtheorem{remark}{Remark}[section]
\newtheorem{example}{Example}[theorem]
\begin{document}

\title[Some remarks on Birkhoff-James orthogonality of linear operators]{Some remarks on Birkhoff-James orthogonality of linear operators}
\author[Debmalya Sain, Kallol Paul and Arpita Mal  ]{ Debmalya Sain, Kallol Paul and Arpita Mal  }

\address[Sain]{Department of Mathematics\\ Indian Institute of Science\\ Bengaluru 560012\\ Karnataka \\India\\ }
\email{saindebmalya@gmail.com}

\address[Paul]{Department of Mathematics\\ Jadavpur University\\ Kolkata 700032\\ West Bengal\\ INDIA}
\email{kalloldada@gmail.com}

\address[Mal]{Department of Mathematics\\ Jadavpur University\\ Kolkata 700032\\ West Bengal\\ INDIA}
\email{arpitamalju@gmail.com}

\thanks{The research of Dr. Debmalya Sain is sponsored by Dr. D. S. Kothari Postdoctoral Fellowship. Dr. Sain feels elated to lovingly acknowledge the immense contribution of his extended family in every sphere of his life, mathematical or otherwise. The research of Prof Kallol Paul  is supported by project MATRICS  of DST, Govt. of India. The research of third author is supported by UGC, Govt. of India.}

\subjclass[2010]{Primary 47L05, Secondary 46B20.}
\keywords{Birkhoff-James orthogonality; linear operator;  semi-inner-product; best approximation}

\begin{abstract}
We study Birkhoff-James orthogonality of compact (bounded) linear operators between Hilbert spaces and Banach spaces. Applying the notion of semi-inner-products in normed linear spaces and some related geometric ideas, we generalize and improve some of the recent results in this context. In particular, we obtain a characterization of Euclidean spaces and also prove that it is possible to retrieve the norm of a compact (bounded) linear operator (functional) in terms of its Birkhoff-James orthogonality set. We also present some best approximation type results in the space of bounded linear operators.    
\end{abstract}

\maketitle

\section{Introduction and preliminaries}
The purpose of the present article is to generalize and improve some of the recent results on Birkhoff-James orthogonality of bounded (compact) linear operators defined between real normed linear spaces. Let us first establish the notations and the terminologies to be used throughout the article.\\
Let $ \mathbb{X},\mathbb{Y} $ denote normed linear spaces. In this paper, without explicitly mentioning any further, we work with only \emph{real} normed linear spaces. Let $ \mathbb{H} $ denote a Hilbert space. Finite-dimensional Hilbert spaces are also known as Euclidean spaces. Let $ B_{\mathbb{X}} $ and $ S_{\mathbb{X}} $ denote the unit ball and the unit sphere of $ \mathbb{X} $ respectively, i.e., $ B_{\mathbb{X}} = \{ x \in \mathbb{X}~: \| x \| \leq 1 \} $ and $ S_{\mathbb{X}} = \{ x \in \mathbb{X}~: \| x \| = 1 \}. $ We say that $ \mathbb{X} $ is strictly convex if every point of $ S_{\mathbb{X}} $ is an extreme point of the convex set $ B_{\mathbb{X}}. $ Let $ \mathbb{L}(\mathbb{X},\mathbb{Y}) $ and $ \mathbb{K}(\mathbb{X},\mathbb{Y}) $ denote the normed linear space of all bounded and compact linear operators from $ \mathbb{X} $ to $ \mathbb{Y} $ respectively, endowed with the usual operator norm. Given a bounded linear operator $ T \in \mathbb{L}(\mathbb{X},\mathbb{Y}), $ let $ M_T $ denote the collection of unit vectors at which $ T $ attains norm,  i.e., $ M_T = \{x \in S_{\mathbb{X}} : \| Tx \| = \| T \|\}. $ A sequence $\{x_n\}\subseteq S_{\mathbb{X}}$ is said to be a norming sequence for a bounded linear operator $T$ if $\|Tx_n\|\to \|T\|.$ Given a subset $ A $ of $ \mathbb{X}, $ let $ |A| $ denote the cardinality of $ A. $ Birkhoff-James orthogonality \cite{B,J} plays an important role in describing the geometry of normed linear spaces. Given any two elements $ x,y \in \mathbb{X}, $ we say that $ x $ is Birkhoff-James orthogonal to $ y, $ written as $ x \perp_B y, $ if $ \| x+\lambda y \| \geq \| x \| $ for all scalars $ \lambda. $ Naturally, this definition makes sense in the normed linear space of all bounded linear operators. \\

Recently, W\'{o}jcik \cite{W} has studied Birkhoff-James orthogonality of compact linear operators between normed linear spaces and Hilbert spaces and has obtained a characterization of finite-dimensional Hilbert spaces among several applications of the said study. In this article, we illustrate that it is possible to generalize and improve some of the results obtained in \cite{W} by applying a geometric concept introduced in \cite{S}. Given any two elements $ x,y \in \mathbb{X},  $ we say that $ y \in x^{+} $  if $ \| x+\lambda y \| \geq \| x \| $ for all $ \lambda \geq 0. $ Similarly, we say that $ y \in x^{-} $  if $ \| x+\lambda y \| \geq \| x \| $ for all $ \lambda \leq 0. $ Basic geometric properties of a normed linear space related to these notions has been explored in Proposition $ 2.1 $ of \cite{S}. We further require the concept of semi-inner-products (s.i.p.) \cite{G,L} in normed linear spaces, that plays an important role in the whole scheme of things. Let us first mention the relevant definition.

\begin{definition}
Let $ \mathbb{X} $ be a normed space. A function $ [ ~,~ ] : \mathbb{X} \times \mathbb{X} \longrightarrow \mathbb{K}(=\mathbb{R},~\mathbb{C}) $ is a semi-inner-product (s.i.p.) if for any $ \alpha,~\beta \in \mathbb{K} $ and for any $ x,~y,~z \in \mathbb{X}, $ it satisfies the following:\\
$ (a) $ $ [\alpha x + \beta y, z] = \alpha [x,z] + \beta [y,z], $\\
$ (b) $ $ [x,x] > 0, $ whenever $ x \neq 0, $\\
$ (c) $ $ |[x,y]|^{2} \leq [x,x] [y,y], $\\
$ (d) $ $ [x,\alpha y] = \overline{\alpha} [x,y]. $
\end{definition} 

Semi-inner-products were introduced by Lumer \cite{L} in order to effectively apply Hilbert space type arguments in the setting of normed linear spaces. It follows from \cite{G} that every normed linear space $ (\mathbb{X},\|\|) $ can be represented as an s.i.p. space $ (\mathbb{X},[~,~]) $ such that for all $ x \in \mathbb{X}, $ we have, $ [x,x] = \| x \|^{2}. $ We note that in general, there can be many compatible s.i.p. corresponding to a given norm. Whenever we speak of a s.i.p. $ [~,~] $ in context of a normed linear space $ \mathbb{X} $, we implicitly assume that  $ [~,~] $ is compatible with the norm, i.e., for all $ x \in \mathbb{X}, $ we have, $ [x,x] = \| x \|^{2}. $ Lumer stated in \cite{L} that there exists a unique s.i.p. on a normed linear space $ \mathbb{X} $ if and only if the space is smooth, i.e., there exists a unique supporting hyperplane to $ B_{\mathbb{X}} $ at each point of $ S_{\mathbb{X}}. $\\

In this paper, we apply the above mentioned concepts to characterize the Birkhoff-James orthogonality set of any compact linear operator between a reflexive Banach space and any Banach space, provided the norm attainment set of the operator is of a particularly nice form. As an application of the above result, we deduce the infinite-dimensional Bhatia-$\breve{S}$emrl theorem for compact operators on a Hilbert space and also obtain a complete characterization of Euclidean spaces among all finite-dimensional Banach spaces. We next prove an interesting correlation between the concept of s.i.p. and the notions of $ x^{+} $  and $ x^{-}. $  This enables us to retrieve the norm of a compact (bounded) linear operator, or functional, in terms of its interaction with its Birkhoff-James orthogonal set. As another application of our study, we present some best approximation type results in the setting of Hilbert spaces and Banach spaces. We observe that the results obtained by us in this paper generalize and improve some of the related results obtained by W\'{o}jcik \cite{W}. We would like to end this section with the remark that the present work may also be viewed as an illustration of the applicability of s.i.p. type arguments in the study of geometry of normed linear spaces.

\section{ Main Results.}
We begin with a complete characterization of Birkhoff-James orthogonality of compact linear operators between  a reflexive Banach space and any Banach space, under an additional assumption on the norm attainment set of one of the operators. For this we need the proposition, the proof of which is easy.

\begin{prop}\label{prop-connected}
Let $\mathbb{X}$ be a normed linear space. Let $A$ be a closed subset of $\mathbb{X}$ such that $A=B\cup C,$ where $B$ and $C$ are connected. If $A$ is not connected, then $B$ and $C$ are closed. 
\end{prop}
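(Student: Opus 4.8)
The plan is to argue purely topologically, using the standard fact that a connected subset of a space admitting a separation must lie entirely within one of the two pieces of that separation. The normed space structure of $\mathbb{X}$ plays no role beyond providing the ambient topology; the only place the hypotheses interact is at the very end, when relative closedness is promoted to closedness using that $A$ itself is closed.

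First I would unwind the assumption that $A$ is not connected. By definition of disconnectedness this produces a separation of $A$: there exist nonempty sets $U$ and $V$, each relatively open (equivalently relatively closed) in $A$, with $A = U \cup V$ and $U \cap V = \emptyset$. Since $B$ is connected and $B \subseteq A = U \cup V$, the set $B$ must be contained entirely in $U$ or entirely in $V$; the same applies to $C$. I would then rule out the case in which $B$ and $C$ lie in the same piece, say both in $U$: this would give $A = B \cup C \subseteq U$, forcing $V = \emptyset$ and contradicting the separation. Hence, after relabeling if necessary, I may assume $B \subseteq U$ and $C \subseteq V$.

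Next I would upgrade these inclusions to equalities. Intersecting $A = B \cup C$ with $U$ yields $U = (U \cap B) \cup (U \cap C) = B$, where $U \cap C = \emptyset$ because $C \subseteq V$ is disjoint from $U$; symmetrically $V = C$. Thus $B = U$ and $C = V$ are relatively clopen in $A$, and in particular relatively closed in $A$. Since $A$ is closed in $\mathbb{X}$, any relatively closed subset of $A$ is closed in $\mathbb{X}$, and therefore both $B$ and $C$ are closed, as claimed.

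I do not anticipate a genuine obstacle, as the statement is essentially the observation that the two given connected pieces must coincide with the two halves of a separation of $A$. The only points requiring a little care are the bookkeeping in the relabeling step and the reading of the connectedness hypothesis: the degenerate situation in which one of $B$ or $C$ is empty is harmlessly absorbed, since an empty piece would make $A = B \cup C$ itself connected and contradict the standing hypothesis that $A$ is not connected.
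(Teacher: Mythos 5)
Your proof is correct: the separation argument, the standard fact that a connected subset must lie in one piece of a separation, the identification $B=U$, $C=V$, and the final passage from relative closedness to closedness via the closedness of $A$ are all sound, and you correctly dispose of the degenerate case of an empty piece. The paper itself gives no proof of this proposition (it is stated with the remark that ``the proof \ldots is easy''), and your argument is precisely the standard one the authors evidently had in mind, so there is nothing of substance to contrast.
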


\begin{theorem}\label{th.conn}
Let $ \mathbb{X} $ be a reflexive Banach space and $ \mathbb{Y} $ be any Banach space. Let $ T,A \in \mathbb{K}(\mathbb{X},\mathbb{Y}). $ Suppose either $(i)$ or $(ii)$ holds.\\
(i) $M_T$ is a connected subset of $S_{\mathbb{X}}.$\\
(ii) $M_T$ is not connected but $ M_T = D \cup (-D), $ where $ D $ is a non-empty connected subset of $ S_{\mathbb{X}}. $ \\
 Then $ T \perp_{B} A $ if and only if there exists $ x \in M_T $ such that $ Tx \perp_{B} Ax. $ 
\end{theorem}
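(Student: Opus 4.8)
The plan is to prove both implications, with the reverse one carrying all the weight. The forward direction needs neither hypothesis on $M_T$: if there is some $x \in M_T$ with $Tx \perp_B Ax$, then for every scalar $\lambda$ we have $\|T + \lambda A\| \geq \|(T + \lambda A)x\| = \|Tx + \lambda Ax\| \geq \|Tx\| = \|T\|$, so $T \perp_B A$. For the converse I would introduce the two sets $M_T^{+} = \{x \in M_T : Ax \in (Tx)^{+}\}$ and $M_T^{-} = \{x \in M_T : Ax \in (Tx)^{-}\}$, and note that a point $z \in M_T^{+} \cap M_T^{-}$ is precisely a point with $Tz \perp_B Az$. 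The whole argument is then organized around three facts about these sets.

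First, $M_T = M_T^{+} \cup M_T^{-}$ always holds: for a fixed $x$, the map $\lambda \mapsto \|Tx + \lambda Ax\|$ is convex with value $\|Tx\|$ at $0$, and a one-variable convexity argument identifies $x \in M_T^{+}$ with nonnegativity of its right derivative at $0$ and $x \in M_T^{-}$ with nonpositivity of its left derivative; since the left derivative never exceeds the right, at least one of these must hold. Second, each of $M_T^{+}, M_T^{-}$ is closed, being the intersection of $M_T$ with the closed conditions $\|Tx + \lambda Ax\| \geq \|T\|$ over $\lambda \geq 0$ (respectively $\lambda \leq 0$), each closed by continuity of $x \mapsto \|Tx + \lambda Ax\|$.

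The crux, and the step I expect to be the \textbf{main obstacle}, is to show that $T \perp_B A$ forces both $M_T^{+}$ and $M_T^{-}$ to be nonempty; this is where reflexivity and compactness enter. Taking $\lambda_n \downarrow 0$, the operator $T + \lambda_n A$ is compact on the reflexive space $\mathbb{X}$ and hence attains its norm at some $x_n \in S_{\mathbb{X}}$; passing to a subsequence, $x_n \rightharpoonup x$ weakly, and compactness of $T$ and $A$ upgrades this to $Tx_n \to Tx$ and $Ax_n \to Ax$ in norm. Since $\|T + \lambda_n A\| \geq \|T\|$ and $\|T + \lambda_n A\| \to \|T\|$, one checks $\|Tx\| = \|T\|$ and $\|x\| = 1$, so $x \in M_T$. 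To control the one-sided behaviour I would choose norm-one supporting functionals $f_n$ at $(T + \lambda_n A)x_n$; by Banach--Alaoglu a weak-$*$ cluster point $f$ exists, and combining weak-$*$ convergence of $f_n$ with the norm convergence of $Tx_n, Ax_n$ shows that $f$ is a supporting functional at $Tx$ with $f(Ax) \geq 0$ (the inequality coming from $f_n(Tx_n) \leq \|T\| \leq f_n(Tx_n) + \lambda_n f_n(Ax_n)$ with $\lambda_n > 0$). Because the right derivative of $\lambda \mapsto \|Tx + \lambda Ax\|$ at $0$ dominates $f(Ax) \geq 0$, we conclude $x \in M_T^{+}$. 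The symmetric choice $\lambda_n \uparrow 0$ yields a point of $M_T^{-}$. The delicate part is the simultaneous passage to the limit, weakly in $x_n$ and weak-$*$ in $f_n$, while keeping the pairing $f_n(Ax_n)$ convergent, which is exactly where the compactness of $A$ is indispensable.

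Finally I would invoke connectedness. In case $(i)$, $M_T$ is connected and is covered by the closed nonempty sets $M_T^{+}, M_T^{-}$; if these were disjoint, $M_T$ would be disconnected, so $M_T^{+} \cap M_T^{-} \neq \emptyset$ and any $z$ there satisfies $Tz \perp_B Az$. In case $(ii)$, observe that $M_T^{+}$ and $M_T^{-}$ are invariant under $x \mapsto -x$, since $\|Tx + \lambda Ax\|$ is unchanged when $x$ is replaced by $-x$. Proposition \ref{prop-connected}, applied to $M_T = D \cup (-D)$, shows $D$ is closed; then $D \cap M_T^{+}$ and $D \cap M_T^{-}$ are closed and cover the connected set $D$, and the $\pm$-symmetry guarantees both are nonempty (a point of $M_T^{\pm}$ lying in $-D$ is carried into $D$ by negation). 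Hence they intersect, yielding $z \in D \subseteq M_T$ with $Tz \perp_B Az$, which completes the proof.
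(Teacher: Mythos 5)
Your proof is correct, and its skeleton is exactly the paper's: decompose $M_T$ into $M_T^{+}=\{x\in M_T: Ax\in (Tx)^{+}\}$ and $M_T^{-}=\{x\in M_T: Ax\in (Tx)^{-}\}$, check both are closed, show both are nonempty, and let connectedness (of $M_T$ in case (i), of $D$ in case (ii), via the $\pm$-symmetry of $M_T^{\pm}$) force a common point, which is precisely a point where $Tx\perp_B Ax$. The genuine difference is that the paper outsources the two nontrivial ingredients to citations, while you prove them from scratch: the covering $M_T=M_T^{+}\cup M_T^{-}$ is quoted in the paper from \cite[Prop.\ 2.2]{Sa} (you derive it from convexity of $\lambda\mapsto\|Tx+\lambda Ax\|$ and one-sided derivatives), and the nonemptiness of both sets under $T\perp_B A$ --- which you correctly single out as the crux, and which is where reflexivity and compactness are actually used --- is quoted from \cite[Th.\ 2.1]{SPM}, whereas you re-derive its compact-operator case via norm attainment of $T+\lambda_n A$, weak convergence of the attaining points upgraded to norm convergence of images by compactness, and a weak-$*$ cluster point of supporting functionals. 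What your route buys is a self-contained proof; what the paper's route buys is brevity and greater generality inherited from the cited results. Two small polish points: since $B_{\mathbb{Y}^*}$ need not be weak-$*$ sequentially compact, phrase the limiting step consistently in terms of a cluster point (or subnet) of $\{f_n\}$ rather than ``weak-$*$ convergence of $f_n$'' --- this costs nothing, as only the two fixed vectors $Tx$ and $Ax$ are paired against $f_n$ in the limit; and your explicit symmetry argument in case (ii) (that $x\in M_T^{\pm}$ if and only if $-x\in M_T^{\pm}$, so negation moves witnesses from $-D$ into $D$) fills in a step the paper compresses into ``proceeding as (i),'' which is a gain in completeness.
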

\begin{proof}
	The sufficient part of the theorem is trivial. We only prove the necessary part. Let $T\perp_B A$. \\
	First let $(i)$ be true. Then consider the sets 
	\[W_1=\{x\in M_T:Ax\in (Tx)^+\},\]
	\[W_2=\{x\in M_T:Ax\in (Tx)^-\}.\]
	Then by \cite[Th. 2.1]{SPM}, $W_1\neq \emptyset,~W_2\neq\emptyset.$ It is easy to check that $W_1$ and $W_2$ are closed. Applying \cite[Prop. 2.2]{Sa}, we obtain $M_T=W_1\cup W_2.$ Since $M_T$ is connected, $W_1\cap W_2\neq \emptyset.$ Let $x\in W_1\cap W_2.$ Then $Ax\in (Tx)^+$ and $Ax\in (Tx)^-.$ Hence, $Tx\perp_B Ax.$\\
	Now, suppose that $(ii)$ is true. Then by Proposition \ref{prop-connected},  $D$ is closed. Again considering 
	\[W_1=\{x\in D:Ax\in (Tx)^+\},\]
	\[W_2=\{x\in D:Ax\in (Tx)^-\}\]
	and proceeding as $(i)$ we can show that there exists $x\in D$ such that $Tx\perp_B Ax.$ This completes the proof of the theorem.
\end{proof}

\begin{remark}
(1) This improves Theorem $ 3.1 $ of \cite{W} by removing the additional assumptions that $ \mathbb{Y} $ needs to be smooth and strictly convex. \\
(2)  This also improves Theorem $ 3.1 $ of \cite{W} from the point of view of norm attainment set. We observe that if $ M_T $ is connected or $ |M_T|=2, $ then $ M_T = D \cup (-D), $ where $ D $ is a connected subset of $ S_{\mathbb{X}}.$ Note that there are operators for which $M_T= D \cup (-D)$ but neither $ M_T $ is connected nor $ |M_T|=2, $ in which case  we can apply the above Theorem \ref{th.conn} but not Theorem $ 3.1 $ of \cite{W}. As for example,  consider $T\in\mathbb{L}(\ell^2_{\infty})$ defined by $T(a,b)=(0,a)$ for $(a,b)\in \ell^2_{\infty},$ then $M_T=\{\pm(1,b):|b|\leq 1\},$ which is of the form $D \cup (-D), $ where $ D $ is a connected subset of $ S_{\mathbb{X}}.$ 
\end{remark}

The infinite-dimensional Bhatia-$\breve{S}$emrl theorem for compact operators can be obtained as a corollary to Theorem \ref{th.conn}.

\begin{cor}\label{cor.conn}
Let $ \mathbb{H} $ be an infinite-dimensional Hilbert space. Let $ T,A \in \mathbb{K}(\mathbb{H},\mathbb{H}). $ Then $ T \perp_{B} A $ if and only if there exists $ x \in M_T $ such that $ Tx \perp Ax. $ 
\end{cor}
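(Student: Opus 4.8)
The plan is to derive Corollary \ref{cor.conn} as a direct specialization of Theorem \ref{th.conn}. The theorem requires three things of the domain and the norm attainment set: the domain $\mathbb{X}$ must be a reflexive Banach space, the operators must be compact, and $M_T$ must be either connected or of the form $D \cup (-D)$ with $D$ connected. Here $\mathbb{X} = \mathbb{Y} = \mathbb{H}$ is a Hilbert space, which is automatically reflexive, and $T, A \in \mathbb{K}(\mathbb{H},\mathbb{H})$ are compact by hypothesis, so the first two requirements are immediate. The only genuine work is to verify the structural condition on $M_T$ and to translate the Birkhoff-James conclusion $Tx \perp_B Ax$ into the Hilbert-space orthogonality $Tx \perp Ax$.

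First I would record the elementary fact that in a Hilbert space, Birkhoff-James orthogonality coincides with the usual inner-product orthogonality: for $u,v \in \mathbb{H}$, we have $u \perp_B v$ if and only if $\langle u,v\rangle = 0$. This is standard and follows by expanding $\|u+\lambda v\|^2 = \|u\|^2 + 2\lambda\langle u,v\rangle + \lambda^2\|v\|^2$ and minimizing over $\lambda$. Thus the conclusion $Tx \perp_B Ax$ of the theorem is equivalent to the desired $Tx \perp Ax$, so once the theorem applies the corollary follows.

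The main step is therefore to establish that $M_T$ satisfies hypothesis $(ii)$ of Theorem \ref{th.conn}, namely that $M_T = D \cup (-D)$ for some non-empty connected $D \subseteq S_{\mathbb{H}}$. For a compact operator $T$ on a Hilbert space, the norm attainment set has a clean description: $x \in M_T$ precisely when $T^*Tx = \|T\|^2 x$, i.e.\ $M_T$ consists of the unit eigenvectors of the positive compact self-adjoint operator $T^*T$ associated with its largest eigenvalue $\|T\|^2$. Hence $M_T = S_E$, the unit sphere of the eigenspace $E = \ker(T^*T - \|T\|^2 I)$. Since $T$ is compact, this top eigenspace $E$ is finite-dimensional, so $S_E$ is a sphere in a finite-dimensional Euclidean space. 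I would then split into cases: if $\dim E \geq 2$, the unit sphere $S_E$ is connected (being a sphere of dimension at least one), and I may take $D = M_T$; if $\dim E = 1$, then $M_T = \{x_0, -x_0\}$ for a unit eigenvector $x_0$, which has the form $D \cup (-D)$ with $D = \{x_0\}$ connected. In either case hypothesis $(ii)$ (or $(i)$) of the theorem is met.

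The part requiring the most care is confirming the eigenvector characterization of $M_T$ and the finite-dimensionality of the top eigenspace. Since $T$ is compact, $T^*T$ is compact, positive, and self-adjoint, so by the spectral theorem its spectrum consists of nonnegative eigenvalues accumulating only at $0$, with finite-dimensional eigenspaces for each nonzero eigenvalue; the operator norm satisfies $\|T\|^2 = \|T^*T\|$, which equals the largest eigenvalue and is attained. The identity $\|Tx\|^2 = \langle T^*Tx, x\rangle$ together with the variational characterization of the top eigenvalue shows that $\|Tx\| = \|T\|$ for a unit vector $x$ exactly when $x$ lies in the top eigenspace $E$, giving $M_T = S_E$ with $E$ finite-dimensional. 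With this in hand, the structural hypothesis of Theorem \ref{th.conn} is verified, the theorem applies, and the equivalence of $\perp_B$ with $\perp$ in $\mathbb{H}$ completes the proof.
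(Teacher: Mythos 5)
Your proposal is correct and takes essentially the same route as the paper: both reduce the corollary to Theorem \ref{th.conn} by verifying that $M_T$ has the form $D \cup (-D)$ with $D$ non-empty and connected, and then identify Birkhoff-James orthogonality with inner-product orthogonality in $\mathbb{H}$. The only difference is that the paper cites \cite[Th. 2.2]{SP} for the fact that $M_T$ is the unit sphere of a subspace (hence connected, or a pair of antipodal points), whereas you derive that structure directly from the spectral theorem applied to $T^{*}T$ --- which is precisely the argument underlying the cited result, so nothing of substance diverges (note only that for $T=0$ the top eigenspace is not finite-dimensional, but connectedness of its unit sphere still holds, so your case split goes through unchanged).
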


\begin{proof}
Since every Hilbert space is reflexive and $ T $ is compact, it follows that $ M_T \neq \phi.$ Moreover, it follows from of \cite[Th. 2.2]{SP} that either $ M_T $ is connected or $ |M_T|=2. $ Thus, $ M_T =  D \cup (-D), $ where $ D $ is a non-empty connected subset of $ S_{\mathbb{X}}. $  Therefore, the desired result follows directly from Theorem \ref{th.conn}.
\end{proof}

As an application of Corollary \ref{cor.conn}, we now obtain a  characterization of Euclidean spaces among all finite-dimensional Banach spaces. We would like to note that the following theorem improves Theorem $ 4.2 $ of \cite{W}.\\

\begin{theorem}
Let $ \mathbb{X} $ be a finite-dimensional Banach space. Then the following statements are equivalent:\\
\noindent $ (1) $ Given any $ T \in \mathbb{L}(\mathbb{X},\mathbb{X}), $ we have, $ M_T $ is the unit sphere of some subspace of $ \mathbb{X}. $\\
 $ (2) $ Given any $ T \in \mathbb{L}(\mathbb{X},\mathbb{X}), $ we have, $ M_T =  D_T \cup (-D_T), $ where $ D_T $ is a connected subset of $ S_{\mathbb{X}}. $ \\
$ (3) $ $ \mathbb{X} $ is an Euclidean space.
\end{theorem}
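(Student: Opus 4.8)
The plan is to prove the cyclic chain of implications $(3) \Rightarrow (1) \Rightarrow (2) \Rightarrow (3)$, which makes the three statements equivalent. The first two implications are the routine ones, while $(2) \Rightarrow (3)$ carries essentially all of the content and is where I expect the main difficulty. Note first that since $\mathbb{X}$ is finite-dimensional it is reflexive, every bounded operator is compact (so $\mathbb{K}(\mathbb{X},\mathbb{X}) = \mathbb{L}(\mathbb{X},\mathbb{X})$), and $M_T \neq \emptyset$ for every $T$; in particular Theorem \ref{th.conn} is available for all operators on $\mathbb{X}$, and $M_T$ is always a closed, centrally symmetric subset of $S_{\mathbb{X}}$ because $\| T(-x) \| = \| Tx \|$.

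For $(3) \Rightarrow (1)$, I would work with the inner product of the Euclidean space $\mathbb{X}$. Given $T \in \mathbb{L}(\mathbb{X},\mathbb{X})$, the operator $T^{*}T$ is positive and self-adjoint, hence orthogonally diagonalizable, and $\| Tx \|^{2} = \langle T^{*}Tx, x \rangle \leq \| T \|^{2} \| x \|^{2}$ for every $x$, with equality exactly when $x$ lies in the eigenspace $V$ of $T^{*}T$ for its largest eigenvalue $\| T \|^{2}$. Thus $M_T = S_{\mathbb{X}} \cap V$ is the unit sphere of the subspace $V$, giving $(1)$. For $(1) \Rightarrow (2)$, suppose $M_T = S_{\mathbb{X}} \cap V$ for a subspace $V$. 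If $\dim V \leq 1$, then $M_T$ is either empty or a single antipodal pair $\{ x, -x \}$, so we take $D_T = \{ x \}$. If $\dim V \geq 2$, then $S_{\mathbb{X}} \cap V$ is homeomorphic to a Euclidean sphere of dimension at least one, hence connected (and centrally symmetric), so $D_T = M_T$ works. In every case $M_T = D_T \cup (-D_T)$ with $D_T$ connected, which is $(2)$.

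The crux is $(2) \Rightarrow (3)$. The key observation is that condition $(2)$ is precisely the structural hypothesis on the norm attainment set demanded in Theorem \ref{th.conn}. Hence, assuming $(2)$, Theorem \ref{th.conn} applies to \emph{every} pair $T, A \in \mathbb{L}(\mathbb{X},\mathbb{X})$ and produces the Bhatia--$\breve{S}$emrl property: for all $T, A$ one has $T \perp_{B} A$ if and only if there exists $x \in M_T$ with $Tx \perp_{B} Ax$. It then remains to deduce that a finite-dimensional Banach space on which this operator orthogonality characterization holds for every pair of operators must be Euclidean, which is the step that actually forces an inner product structure and which I would establish along the lines of (and as an improvement upon) Theorem $4.2$ of \cite{W}.

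The main obstacle is exactly this final step, and I would attack it by contraposition: assuming $\mathbb{X}$ is not Euclidean, I would construct a single operator $T$ whose norm attainment set $M_T$ has at least two distinct antipodal pairs of connected components, since such a closed symmetric set cannot be written as $D \cup (-D)$ with $D$ connected, thereby violating $(2)$. The delicate point is converting the failure of $B_{\mathbb{X}}$ to be an ellipsoid into the existence of one operator whose maximizing set splits into several separated pieces: rank-one operators are inert here, because the norm attainment set of a functional is a face, hence convex and connected, so the obstruction must be exploited through higher-rank operators where, in the absence of the spectral control afforded by an inner product, bounding and separating the connected components of $M_T$ is the genuinely hard part of the argument.
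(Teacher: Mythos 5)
Your overall architecture is the same cycle of implications as the paper's ($(1) \Rightarrow (2) \Rightarrow (3) \Rightarrow (1)$), and the two routine implications are handled correctly: for $(3) \Rightarrow (1)$ you give a direct spectral argument via $T^{*}T$ (the paper instead cites \cite[Th. 2.2]{SP}), and your case analysis for $(1) \Rightarrow (2)$ is fine (the paper calls this step obvious). Your first move in $(2) \Rightarrow (3)$ --- observing that hypothesis $(2)$ is exactly the structural assumption required by Theorem \ref{th.conn}, so that every pair with $T \perp_{B} A$ admits $x \in M_T$ with $Tx \perp_{B} Ax$ --- is also precisely the paper's argument.

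However, there is a genuine gap at the crux. The paper closes $(2) \Rightarrow (3)$ in one line by invoking \cite[Th. 3.3]{BFS}, a known theorem of Ben\'{i}tez, Fern\'{a}ndez and Soriano stating exactly that a finite-dimensional Banach space in which this pointwise Bhatia-$\breve{S}$emrl type property holds for all pairs of operators must be an inner product space. You do not prove this step: you pivot to a contraposition plan --- if $\mathbb{X}$ is not Euclidean, construct a single operator $T$ whose $M_T$ contains two separated antipodal pairs, so that $M_T$ cannot be written as $D \cup (-D)$ with $D$ connected --- and you explicitly leave that construction, which you yourself call ``the genuinely hard part,'' undone. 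Two further structural remarks on that plan: (i) what you describe is not the contrapositive of your ``final step'' (Bhatia-$\breve{S}$emrl property implies Euclidean) but of the whole implication $(2) \Rightarrow (3)$; if completed it would be logically sufficient, but it would render your appeal to Theorem \ref{th.conn} redundant. (ii) The construction you need is strictly stronger than what the statement of \cite[Th. 2.2]{SP} supplies, since a norm attainment set that fails to be the unit sphere of a subspace can still have the form $D \cup (-D)$ with $D$ connected (the paper's own example $T(a,b)=(0,a)$ on $\ell^2_{\infty}$, with $M_T = \{\pm(1,b) : |b| \leq 1\}$, is of this kind). So as it stands, the step that actually forces the inner product structure is missing; citing \cite[Th. 3.3]{BFS}, or reproducing its proof, is what is required to complete the argument.
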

\begin{proof}
     $ (1) \Rightarrow (2)$ is obvious.\\
      $ (2)\Rightarrow (3). $
      Let $T,A$ be arbitrary linear operators on $\mathbb{X}$ such that $T\perp_B A.$ Since $M_T= D_T \cup (-D_T),$ where $D_T$ is a connected subset of $S_{\mathbb{X}}$, by Theorem \ref{th.conn}, there exists $x\in M_T$ such that $Tx\perp_B Ax.$ Hence, by \cite[Th. 3.3]{BFS}, $\mathbb{X}$ is an Euclidean space.\\
      $(3)\Rightarrow (1).$
      It follows from \cite[Th. 2.2]{SP}.
\end{proof}

Our next objective is to obtain a connection between the geometric concepts of $ x^+, x^- $  introduced in \cite{S} and the notion of s.i.p. in normed linear spaces.

\begin{theorem}\label{theorem:ortho-sip1}
	Let $ \mathbb{X} $ be a normed linear space and $ x, y \in \mathbb{X}. $ Then the following are true: \\
	(i) $ y \in x^{+} $ if and only if there exists a s.i.p. $ [~,~] $ on $ \mathbb{X} $ such that $ [y, x] \geq 0. $\\
	(ii) $ y \in x^{-} $ if and only if there exists a s.i.p. $ [~,~] $ on $ \mathbb{X} $ such that $ [y, x] \leq 0. $
\end{theorem}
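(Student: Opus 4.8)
The plan is to prove (i) in full and then deduce (ii) by a sign change. I first dispose of the degenerate cases: if $x = 0$ or $y = 0$, then $[y,x] = 0$ for every compatible s.i.p. (by axioms (a) and (d)), while $y \in x^{+}$ and $y \in x^{-}$ hold automatically; hence (i) and (ii) are trivially true and I may assume $x \neq 0$ and $y \neq 0$.

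The ``if'' direction of (i) is a direct s.i.p. computation. Suppose $[~,~]$ is a compatible s.i.p. with $[y,x] \geq 0$. For every $\lambda \geq 0$, linearity in the first slot (a) gives $[x+\lambda y, x] = \|x\|^{2} + \lambda[y,x] \geq \|x\|^{2}$, while Cauchy-Schwarz (c) gives $[x+\lambda y, x] \leq \|x+\lambda y\|\,\|x\|$. Combining these and dividing by $\|x\| > 0$ yields $\|x + \lambda y\| \geq \|x\|$ for all $\lambda \geq 0$, that is, $y \in x^{+}$.

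For the ``only if'' direction I will first extract the right norming functional and then build a s.i.p. realizing it. Consider the right-hand directional derivative $p(z) = \lim_{t \to 0^{+}} \tfrac{1}{t}\big(\|x + tz\| - \|x\|\big)$; the difference quotient is nondecreasing in $t$ by convexity of $t \mapsto \|x + tz\|$, so $p$ is a well-defined sublinear functional satisfying $p(z) \leq \|z\|$ and $p(\pm x) = \pm\|x\|$. Because $y \in x^{+}$, the quotients defining $p(y)$ are nonnegative, so $p(y) \geq 0$. I then set $\ell(sy) = s\,p(y)$ on $\mathrm{span}(y)$, verify $\ell \leq p$ there (the only nontrivial case $s < 0$ reduces to $p(y) + p(-y) \geq p(0) = 0$), and extend $\ell$ by the Hahn-Banach theorem to a linear $f$ on $\mathbb{X}$ with $f \leq p$. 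Then $f \leq p \leq \|\cdot\|$ forces $\|f\| \leq 1$, while $f(x) \leq p(x) = \|x\|$ together with $f(-x) \leq p(-x) = -\|x\|$ forces $f(x) = \|x\|$; thus $f$ is a norming functional at $x$ with $f(y) = p(y) \geq 0$.

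It remains to produce a compatible s.i.p. whose support mapping carries $x$ to $\|x\|f$. Following Giles' correspondence between compatible s.i.p.'s and homogeneous support mappings \cite{G}, I will select, for each one-dimensional subspace of $\mathbb{X}$, a unit vector together with a norming functional, choosing $f$ on the line $\mathrm{span}(x)$, and extend each choice by homogeneity to obtain $J : \mathbb{X} \to \mathbb{X}^{*}$ with $J(\alpha z) = \alpha J(z)$, $J(z)(z) = \|z\|^{2}$ and $\|J(z)\| = \|z\|$. Defining $[u,v] := J(v)(u)$, axioms (a), (b), (c) are immediate and axiom (d) is precisely the homogeneity of $J$, so $[~,~]$ is a compatible s.i.p. with $[y,x] = J(x)(y) = \|x\|f(y) \geq 0$. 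Finally, (ii) follows from (i) applied to $-y$: since $y \in x^{-}$ exactly when $-y \in x^{+}$ and $[-y,x] = -[y,x]$, a s.i.p. with $[-y,x] \geq 0$ is the same as one with $[y,x] \leq 0$. I expect the main obstacle to be this last construction: arranging the line-by-line selection of norming functionals consistently (i.e. homogeneously) over all of $\mathbb{X}$ so that the resulting bracket really satisfies axiom (d), while still pinning down the prescribed functional $f$ on the line through $x$.
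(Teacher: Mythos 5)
Your proof is correct, and the ``only if'' direction takes a genuinely different route from the paper's. The paper proceeds by case analysis: it first splits off the case $x \perp_B y$ (citing Dragomir--Koliha for an s.i.p.\ with $[y,x]=0$), and in the remaining case $y \in x^{+}\setminus x^{-}$ it distinguishes linearly dependent from linearly independent $x,y$, using James' theorems to produce a vector $z$ with $x \perp_B z$ and a norming functional $g$ at $x$ killing $z$, finally showing the coefficient $a$ in $y = ax+bz$ must be nonnegative. You instead build the required norming functional in one stroke: the one-sided G\^{a}teaux derivative $p(z) = \lim_{t\to 0^{+}} t^{-1}(\|x+tz\|-\|x\|)$ is sublinear with $p(\pm x) = \pm\|x\|$ and $p(y) \geq 0$ precisely because $y \in x^{+}$, and a Hahn--Banach extension dominated by $p$ yields a support functional $f$ at $x$ with $f(y) = p(y) \geq 0$ --- no case analysis, no appeal to the cited results of Sain, James, or Dragomir--Koliha. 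Both proofs then finish identically, assembling a compatible s.i.p.\ by a homogeneous selection of support functionals with the distinguished functional on the line through $x$; your deduction of (ii) from (i) via $y \in x^{-} \Leftrightarrow -y \in x^{+}$ is also cleaner than the paper's ``similar arguments.'' What your route buys is self-containedness and uniformity (it also transparently explains \emph{why} the statement is true: $y \in x^{+}$ is exactly nonnegativity of the directional derivative, which the supporting functional witnesses); what the paper's route buys is brevity through citation and alignment with the machinery used elsewhere in the article. One further small point in your favor: your per-line selection of norming functionals automatically enforces the antipodal consistency $f_{-v} = -f_{v}$ needed for the homogeneous extension to be well defined, a condition the paper's pointwise choice over $S_{\mathbb{X}}$ leaves implicit.
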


\begin{proof}
	We only prove $ (i) $ and note that $(ii)$ can be proved using similar arguments. Suppose there exists a s.i.p. $ [~,~] $ on $ \mathbb{X} $ such that $ [y, x] \geq 0. $ Now, for any $ \lambda \geq 0, $
	$$ \|x+\lambda y\|\|x\| \geq |[x + \lambda y, x]| = |\|x\|^2 + \lambda [y,x]| = \|x\|^2 + \lambda [y,x] \geq \|x\|^2.$$
	Hence, we have $ \|x+\lambda y\| \geq \|x\| $ for all $ \lambda \geq 0. $ Therefore, $ y \in x^{+}. $\\
	Conversely, let  $ y \in x^{+}. $ Then it follows from \cite[Prop. 2.2]{Sa} that either $x\bot_B y$ or $y\in x^+\setminus x^-$. If $x\bot_B y$, then by \cite[Prop. 5.3]{DK}, there exists a s.i.p $[~,~]$ on $\mathbb{X}$ such that $[y,x]=0$. So assume that $y\in x^+\setminus x^-$. Without loss of generality we may assume that $x\in S_{\mathbb{X}}$. Firstly, suppose that $ x $ and $ y $ are linearly dependent. Then $ y= \alpha x, $ where $ \alpha \geq 0, $ as $ y \in x^{+}. $ By virtue of Hahn-Banach theorem, for each $v\in S_{\mathbb{X}}$, there exists at least one linear functional $ f_v \in S_{\mathbb{X}^{*}} $ such that $ f_v(v)=\|v\|=1.$ We choose exactly one such $f_v$ for each $v\in S_{\mathbb{X}}$. For $\lambda \in \mathbb{R}$, we choose $f_{\lambda v}\in \mathbb{X}^*$ such that $f_{\lambda v}=\lambda f_v$. Now, define a mapping $ [~,~]: \mathbb{X}\times \mathbb{X}\longrightarrow \mathbb{R} $ by  $ [u,v] = f_{v}(u) $ for all $ u,~v \in \mathbb{X}. $ This mapping clearly is a s.i.p. on $\mathbb{X}$. Moreover, $ [y,x] =f_x(y)= f_x(\alpha x)= \alpha \|x\| \geq 0, $ as $ \alpha \geq 0. $\\
	Now, suppose $ x, ~ y $ are linearly independent. Let $ Z = span \{x,y\}. $ Then by \cite[Cor. 2.2]{J}, there exists $ z \in Z \setminus \{0\} $ such that $ x \bot_{B} z. $ Hence, $y=ax+bz$ for some $a,b\in \mathbb{R}$. Now, by \cite[Th. 2.1]{J}, there exists a linear functional $g\in S_{\mathbb{X}^*}$ such that $g(x)=\|x\|=1$ and $g(z)=0$. As before, for each $v(\neq x)\in S_{\mathbb{X}}$, choose exactly one $ f_v \in S_{\mathbb{X}^{*}} $ such that $ f_v(v)=\|v\|=1$ and for $x$, choose $f_x=g$. For $\lambda \in \mathbb{R},$ choose $f_{\lambda v}=\lambda f_v $ for all $v\in S_{\mathbb{X}}$. Then the mapping $ [~,~]: \mathbb{X}\times \mathbb{X}\longrightarrow \mathbb{R} $ defined by  $ [u,v] = f_{v}(u) $ for all $ u,~v \in \mathbb{X}, $ is a s.i.p. on $\mathbb{X}$. Moreover, $[y,x] = f_{x}(y)= f_{x}(ax+bz)=a .$ We claim that $a\geq 0. $ For if $ a < 0, $ then for all $ \lambda \leq 0, $
	\[ \|x + \lambda y \|=\|x + \lambda y \| \|f_{x}\| \geq f_{x}(x+ \lambda y)=f_{x}((1+ \lambda a)x + \lambda bz)= (1+ \lambda a)\|x\| \geq \|x\|.\] Therefore, $ y \in x^{-}, $ a contradiction. Thus, for any $ y \in x^{+} $ there exists a s.i.p. $ [~,~] $ on $ \mathbb{X} $ such that $ [y, x] \geq 0. $ This completes the proof of $ (i)$ and establishes the theorem.  
\end{proof}

As an useful application of the above result, we generalize Lemma $ 5.1 $ of \cite{W} to the setting of Banach spaces.
 
\begin{theorem}\label{th.normop}
Let $ \mathbb{X} $ be  a reflexive Banach space and $ \mathbb{Y} $ be any Banach space. Let $ T,A \in \mathbb{K}(\mathbb{X},\mathbb{Y}) $ be such that $ T \perp_{B} A. $ Let $\mathcal{O}$ denote the collection of all s.i.p. on $ \mathbb{Y}. $ Then
\begin{align*}
\| T \| &= sup\{ [Tx,y] : x \in S_{\mathbb{X}}, y \in S_{\mathbb{Y}}, [~,~] \in \mathcal{O}, [Ax,y] \geq 0 \}\\
        &= sup\{ [Tx,y] : x \in S_{\mathbb{X}}, y \in S_{\mathbb{Y}}, [~,~] \in \mathcal{O}, [Ax,y] \leq 0 \}.
\end{align*}
\end{theorem}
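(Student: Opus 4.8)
The plan is to prove the two claimed equalities by separately establishing that $\|T\|$ is an upper bound for each supremum and that this bound is actually attained. Write $S^{+}$ and $S^{-}$ for the first and second suprema respectively; the case $T=0$ is trivial (both suprema vanish and $\|T\|=0$), so I would assume $T\neq 0$ throughout.

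For the upper bound I would invoke only the Cauchy--Schwarz inequality built into the definition of a s.i.p., namely property $(c)$. For any $x\in S_{\mathbb{X}}$, $y\in S_{\mathbb{Y}}$ and any $[~,~]\in\mathcal{O}$, one has $[Tx,y]\leq |[Tx,y]|\leq \sqrt{[Tx,Tx]}\sqrt{[y,y]}=\|Tx\|\,\|y\|\leq \|T\|$, using the compatibility of the s.i.p. with the norm. Since this bound makes no reference to the sign of $[Ax,y]$, it simultaneously yields $S^{+}\leq\|T\|$ and $S^{-}\leq\|T\|$.

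For attainment the key is to produce an admissible triple $(x,y,[~,~])$ realizing the value $\|T\|$. Because $\mathbb{X}$ is reflexive and $T$ is compact, $M_T\neq\emptyset$; moreover, since $T\perp_{B}A$, \cite[Th. 2.1]{SPM} guarantees that the sets $\{x\in M_T:Ax\in (Tx)^{+}\}$ and $\{x\in M_T:Ax\in (Tx)^{-}\}$ are both non-empty, exactly as used in the proof of Theorem \ref{th.conn}. For $S^{+}$ I would pick $x_{1}\in M_T$ with $Ax_{1}\in (Tx_{1})^{+}$ and apply Theorem \ref{theorem:ortho-sip1}$(i)$ to obtain a s.i.p. $[~,~]$ on $\mathbb{Y}$ with $[Ax_{1},Tx_{1}]\geq 0$. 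Choosing $y=Tx_{1}/\|T\|\in S_{\mathbb{Y}}$ (legitimate since $\|Tx_{1}\|=\|T\|\neq 0$) and using the homogeneity property $(d)$, I get $[Ax_{1},y]=\tfrac{1}{\|T\|}[Ax_{1},Tx_{1}]\geq 0$, so the triple is admissible, while $[Tx_{1},y]=\tfrac{1}{\|T\|}[Tx_{1},Tx_{1}]=\tfrac{\|Tx_{1}\|^{2}}{\|T\|}=\|T\|$. Hence $S^{+}\geq\|T\|$, and combined with the upper bound, $S^{+}=\|T\|$; in fact the supremum is attained. The computation for $S^{-}$ is identical, using instead $x_{2}\in M_T$ with $Ax_{2}\in (Tx_{2})^{-}$ and Theorem \ref{theorem:ortho-sip1}$(ii)$ to secure a s.i.p. with $[Ax_{2},Tx_{2}]\leq 0$.

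I do not anticipate a genuine obstacle: the argument is a direct synthesis of \cite[Th. 2.1]{SPM} (existence of the correct norming vectors on both the $+$ and $-$ sides) and Theorem \ref{theorem:ortho-sip1} (translating membership in $(Tx)^{\pm}$ into the existence of a s.i.p. of the appropriate sign). The only mildly delicate point is the choice of the witnessing vector $y$; normalizing $y=Tx/\|T\|$ is precisely what converts the sign condition supplied by Theorem \ref{theorem:ortho-sip1} into an admissible constraint while simultaneously forcing $[Tx,y]$ to equal $\|T\|$ exactly, so that each supremum is achieved rather than merely approached.
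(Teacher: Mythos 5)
Your proof is correct and follows essentially the same route as the paper's: both invoke \cite[Th. 2.1]{SPM} to produce $u,v\in M_T$ with $Au\in (Tu)^{+}$ and $Av\in (Tv)^{-}$, translate these via Theorem \ref{theorem:ortho-sip1} into s.i.p.s of the appropriate sign, and then test with the normalized vector $y=Tu/\|Tu\|$ to show the supremum is attained at the value $\|T\|$. The only difference is one of explicitness: you spell out the Cauchy--Schwarz upper bound and the homogeneity check that makes the triple admissible, both of which the paper leaves implicit.
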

\begin{proof}
	Since $T\perp_B A,$ by \cite[Th. 2.1]{SPM}, there exists $u,v\in M_T$ such that $Au\in (Tu)^+$ and $Av\in (Tv)^-.$ Now, by Theorem \ref{theorem:ortho-sip1}, there exists s.i.p. $[,]_1,~[,]_2\in \mathcal{O}$ such that $[Au,Tu]_1\geq 0$ and $[Av,Tv]_2\leq 0.$ Therefore, 
	\begin{align*}
	\| T \| & \geq sup\{ [Tx,y] : x \in S_{\mathbb{X}}, y \in S_{\mathbb{Y}}, [~,~] \in \mathcal{O}, [Ax,y] \geq 0 \}\\
	& \geq \Big[Tu,\frac{Tu}{\|Tu\|}\Big]_1\\
	&= \|T\|.
	\end{align*}
	Thus, $\|T\|=sup\{ [Tx,y] : x \in S_{\mathbb{X}}, y \in S_{\mathbb{Y}}, [~,~] \in \mathcal{O}, [Ax,y] \geq 0 \}.$\\
	Similarly, using $[Av,Tv]_2\leq 0$ for $v\in M_T,$ it can be shown that
    $\|T\|= sup\{ [Tx,y] : x \in S_{\mathbb{X}}, y \in S_{\mathbb{Y}}, [~,~] \in \mathcal{O}, [Ax,y] \leq 0 \}.$
\end{proof}

\begin{remark}
We would like to note that Lemma $ 5.1 $ of \cite{W} can be deduced from Theorem \ref{th.conn} by following the same line of arguments, as given in the proof of Theorem \ref{th.normop}.
\end{remark}

Our next result generalizes Theorem \ref{th.normop}, by allowing the linear operators to be bounded instead of compact.

\begin{theorem}
Let $ \mathbb{X},\mathbb{Y} $ be any two normed linear spaces. Let $ T,A \in \mathbb{L}(\mathbb{X},\mathbb{Y}) $ be such that $ T \perp_{B} A. $ Let $\mathcal{O}$ denote the collection of all s.i.p. on $ \mathbb{Y}. $ Let $ \epsilon > 0 $ be arbitrary but fixed after choice. Then $ \| T \| = max~\{ l_{1}(\epsilon), l_{2}(\epsilon) \} = max~\{ l_{1}(\epsilon), l_{3}(\epsilon) \},  $ where,\\
$ l_{1}(\epsilon) = sup\{ [Tx,y] : x \in S_{\mathbb{X}}, y \in S_{\mathbb{Y}}, [~,~] \in \mathcal{O}, |[Ax,y]| < \epsilon \}, $ \\
$ l_{2}(\epsilon) = sup\{ [Tx,y] : x \in S_{\mathbb{X}}, y \in S_{\mathbb{Y}}, [~,~] \in \mathcal{O}, Ax \in (y)^{+\epsilon} \}, $ \\
$ l_{3}(\epsilon) = sup\{ [Tx,y] : x \in S_{\mathbb{X}}, y \in S_{\mathbb{Y}}, [~,~] \in \mathcal{O}, Ax \in (y)^{-\epsilon} \}. $ \\ 
\end{theorem}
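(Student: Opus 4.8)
The plan is to bound $\|T\|$ from above and below separately, using only the defining inequality $\|T+\lambda A\| \ge \|T\|$ (for all real $\lambda$) together with the Cauchy--Schwarz inequality for a s.i.p. For the upper bound, note that for every admissible triple $(x,y,[~,~])$ one has $[Tx,y] \le |[Tx,y]| \le \|Tx\|\,\|y\| \le \|T\|$, by property $(c)$ of a s.i.p.; hence each of $l_1(\epsilon), l_2(\epsilon), l_3(\epsilon)$ is at most $\|T\|$, and so are both maxima. The whole content is therefore the reverse inequality, and the crux is that, lacking compactness, $T$ need not attain its norm, so there is no $M_T$ to exploit as in Theorem~\ref{th.normop}; the perturbation parameter $\lambda$ and the tolerance $\epsilon$ are precisely what compensate for this.

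For the lower bound of $\max\{l_1(\epsilon), l_2(\epsilon)\}$, fix $\eta > 0$, choose $\lambda > 0$ so small that $\lambda\|A\| < \eta$, and then $\delta$ with $0 < \delta < \min\{\eta, \lambda\epsilon\}$. Since $T \perp_B A$ gives $\|T+\lambda A\| \ge \|T\|$, I would pick $x \in S_{\mathbb{X}}$ with $\|(T+\lambda A)x\| > \|T\| - \delta$, set $y = (T+\lambda A)x / \|(T+\lambda A)x\| \in S_{\mathbb{Y}}$, and fix any compatible s.i.p. $[~,~]$ on $\mathbb{Y}$ (which exists by Giles' representation). For this $y$ the identity $[(T+\lambda A)x, y] = \|(T+\lambda A)x\|$ holds automatically, so that $[Tx,y] + \lambda[Ax,y] = \|(T+\lambda A)x\| > \|T\| - \delta$. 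Using $[Tx,y] \le \|T\|$ in this identity yields $\lambda[Ax,y] > -\delta$, whence $[Ax,y] > -\delta/\lambda > -\epsilon$; using instead $[Ax,y] \le \|A\|$ yields $[Tx,y] > \|T\| - \delta - \lambda\|A\| > \|T\| - 2\eta$.

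It then remains to place the triple $(x,y,[~,~])$ into the feasible set of either $l_1(\epsilon)$ or $l_2(\epsilon)$, according to the value of $[Ax,y]$. If $[Ax,y] < \epsilon$, then together with $[Ax,y] > -\epsilon$ we get $|[Ax,y]| < \epsilon$, so the triple is admissible for $l_1(\epsilon)$; if instead $[Ax,y] \ge \epsilon$, then by the defining s.i.p. characterization of $(y)^{+\epsilon}$ (the approximate analogue of Theorem~\ref{theorem:ortho-sip1}$(i)$) we have $Ax \in (y)^{+\epsilon}$, so the triple is admissible for $l_2(\epsilon)$. In either case $\max\{l_1(\epsilon), l_2(\epsilon)\} \ge [Tx,y] > \|T\| - 2\eta$; letting $\eta \to 0$ gives $\max\{l_1(\epsilon), l_2(\epsilon)\} \ge \|T\|$, and with the upper bound the first equality follows. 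The second equality is obtained by the symmetric argument, replacing $\|T+\lambda A\| \ge \|T\|$ by $\|T-\lambda A\| \ge \|T\|$: this reverses the sign bookkeeping so that one controls $[Ax,y] < \epsilon$ and splits on whether $[Ax,y] \le -\epsilon$ (landing in $l_3(\epsilon)$ via $Ax \in (y)^{-\epsilon}$) or $[Ax,y] > -\epsilon$ (landing in $l_1(\epsilon)$).

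I expect the main obstacle to be the interlocking choice of the constants $\lambda, \delta, \eta$ relative to the fixed $\epsilon$: one must simultaneously force $[Ax,y]$ into the window $(-\epsilon,\infty)$ and keep $[Tx,y]$ within $\eta$ of $\|T\|$, which pins down the admissible order $\delta < \lambda\epsilon$ and $\lambda\|A\| < \eta$. A secondary subtlety is the precise form of the defining inequality for $(y)^{\pm\epsilon}$ taken from \cite{S}: if it carries a normalization $\epsilon\|Ax\|\,\|y\|$ rather than a bare $\epsilon$, the same argument goes through after rescaling $\delta$ by the harmless factor $\|A\|$, since $\|Ax\| \le \|A\|$ on $S_{\mathbb{X}}$. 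I note finally that the case split is robust to the exact threshold convention: under a weaker relaxation both branches would merely collapse into the $l_2(\epsilon)$ (resp. $l_3(\epsilon)$) set, leaving the two displayed equalities intact.
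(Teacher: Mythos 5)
Your proof is correct, but it takes a genuinely different route from the paper's. The paper deduces the theorem from the dichotomy in \cite[Th. 2.4]{SPM}: since $T \perp_B A$, either (a) there is a norming sequence $\{x_n\}$ for $T$ with $\|Ax_n\| \to 0$, or (b) there are two norming sequences $\{x_n\},\{y_n\}$ and null sequences $\epsilon_n, \delta_n \to 0$ with $Ax_n \in (Tx_n)^{+\epsilon_n}$ and $Ay_n \in (Ty_n)^{-\delta_n}$; feeding the triples $\bigl(x_n, Tx_n/\|Tx_n\|, [~,~]\bigr)$ into the suprema then yields $\|T\| = l_1(\epsilon)$ in case (a), and $\|T\| = l_2(\epsilon) = l_3(\epsilon)$ in case (b), whence both maxima equal $\|T\|$. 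You avoid that external characterization entirely: your perturbation argument --- small $\lambda$ with $\lambda\|A\| < \eta$, a near-norming $x$ for $T+\lambda A$, the vector $y = (T+\lambda A)x/\|(T+\lambda A)x\|$, any compatible s.i.p. (Giles), and then the identity $[(T+\lambda A)x,y] = \|(T+\lambda A)x\|$ combined with Cauchy--Schwarz to force $[Tx,y] > \|T\| - 2\eta$ and $[Ax,y] > -\epsilon$ simultaneously --- reconstructs from first principles exactly the input the paper imports from \cite{SPM}. What your route buys is self-containedness: the only ingredients are the definition of $\perp_B$, the existence of a compatible s.i.p., and the easy direction of Theorem \ref{theorem:ortho-sip1}; in effect you re-prove the relevant half of \cite[Th. 2.4]{SPM} in s.i.p. language. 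What the paper's route buys is brevity and a slightly sharper conclusion: its case analysis shows that a single one of the quantities ($l_1(\epsilon)$ in case (a); each of $l_2(\epsilon)$ and $l_3(\epsilon)$ in case (b)) already equals $\|T\|$, not merely the maxima.

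Two minor points, neither a gap. First, your Case-2 step is cleanest as: $[Ax,y] \ge \epsilon \ge 0$ gives $Ax \in (y)^{+}$ by Theorem \ref{theorem:ortho-sip1}(i), and $(y)^{+} \subseteq (y)^{+\epsilon}$ because the defining inequality of $(y)^{+\epsilon}$ is a relaxation of that of $(y)^{+}$; note the approximate sets $x^{\pm\epsilon}$ are defined in \cite{SPM} (not \cite{S}, which you cite), and under that definition the inclusion is immediate --- indeed you could split at $[Ax,y] \ge 0$ instead of at $\epsilon$ and bypass any approximate characterization. Second, to define $y$ you need $(T+\lambda A)x \neq 0$, so also impose $\delta < \|T\|$ (harmless, since $\eta \to 0$; the degenerate case $T = 0$ is trivial because $l_2(\epsilon) = l_3(\epsilon) = 0$ always, the feasible set being nonempty via $y = \pm Ax/\|Ax\|$).
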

\begin{proof}
	Since $T\perp_B A,$ by \cite[Th 2.4]{SPM},  we have, either (a) or (b) holds:\\
	(a) There exists a norming sequence $ \{x_{n}\} $ for $T$ such that $ \|Ax_n\| \rightarrow 0. $\\
	(b) There exist two norming sequences  $ \{x_{n}\}, ~\{y_{n}\} $ for $T$ and $ ~ \{\epsilon_{n}\}, ~ \{\delta_{n}\} \subseteq [0,1) $ such that $ \epsilon_{n} \rightarrow 0 ,$ $ \delta_{n} \rightarrow 0, $  $ Ax_n \in (Tx_n)^{+\epsilon_{n}}$ and $ Ay_n \in (Ty_n)^{-\delta_{n}} $ for all $ n \in \mathbb{N}. $\\
	Suppose $(a)$ holds. Then since $Ax_n \to 0,$ there exists $k\in \mathbb{N}$ such that $\|Ax_n\|<\epsilon$ for all $n\geq k.$ Therefore, for any s.i.p. $[~,~]\in \mathcal{O},$ $|[Ax_n,\frac{Tx_n}{\|Tx_n\|}]|\leq \|Ax_n\|< \epsilon$ for all $n\geq k.$ Now,
	\begin{eqnarray*}
	\|T\|&\geq &l_1(\epsilon)=sup\{ [Tx,y] : x \in S_{\mathbb{X}}, y \in S_{\mathbb{Y}}, [~,~] \in \mathcal{O}, |[Ax,y]| < \epsilon \}\\
	&\geq & sup\Big\{\Big[Tx_n,\frac{Tx_n}{\|Tx_n\|}\Big]:[~,~]\in \mathcal{O},n\geq k\Big\}\\
	&=& sup\{\|Tx_n\|:n\geq k\}\\
	&=& \|T\|.
	\end{eqnarray*}
    Thus, $\|T\|=l_1(\epsilon).$ On the other hand, for $i=1,2,$ $l_i(\epsilon)\leq \|T\|\Rightarrow max\{l_1(\epsilon),l_2(\epsilon)\}\leq \|T\|=l_1(\epsilon)\leq  max\{l_1(\epsilon),l_2(\epsilon)\}. $ Hence, $\|T\|= max\{l_1(\epsilon),l_2(\epsilon)\}.$ Similarly, $\|T\|= max\{l_1(\epsilon),l_3(\epsilon)\}.$\\
    Now, suppose that $(b)$ holds. Then since $\epsilon_n \to 0,$ there exists $m\in \mathbb{N}$ such that $\epsilon_n \leq \epsilon$ for all $n\geq m.$ Clearly, $Ax_n\in (Tx_n)^{+\epsilon_n}$ for all $n\in \mathbb{N}$ implies that $Ax_n\in (Tx_n)^{+\epsilon}$ for all $n\geq m.$ Thus, $Ax_n\in (\frac{Tx_n}{\|Tx_n\|})^{+\epsilon}$ for all $n\geq m.$ Now,
    	\begin{eqnarray*}
    	\|T\|&\geq &l_2(\epsilon)=sup\{ [Tx,y] : x \in S_{\mathbb{X}}, y \in S_{\mathbb{Y}}, [~,~] \in \mathcal{O}, Ax\in (y)^{+\epsilon} \}\\
    	&\geq & sup\Big\{\Big[Tx_n,\frac{Tx_n}{\|Tx_n\|}\Big]:[~,~]\in \mathcal{O},n\geq m\Big\}\\
    	&=& sup\{\|Tx_n\|:n\geq m\}\\
    	&=& \|T\|.
    \end{eqnarray*}
    Thus, $\|T\|=l_2(\epsilon).$ Now, proceeding as before, we have, $\|T\|= max\{l_1(\epsilon),l_2(\epsilon)\}.$ Similarly, using $Ay_n\in(Ty_n)^{-\delta_n},~\delta_n\to 0$ and $\|Ty_n\|\to\|T\|,$ we obtain  $\|T\|= max\{l_1(\epsilon),l_3(\epsilon)\}.$ This completes the proof of the theorem.
\end{proof}

If we consider bounded linear functionals instead of bounded linear operators, then we have the following two theorems:\\

\begin{theorem}\label{th-normfsc}
Let $ \mathbb{X} $ be a normed linear space such that $ \mathbb{X}^{*} $ is strictly convex. Let $ f, g \in \mathbb{X}^{*} $ be such that $ f \perp_{B} g. $ Then
\begin{align*}
\| f \| &= sup\{ f(x) : x \in S_{\mathbb{X}}, g(x) \geq 0 \}\\
        &= sup\{ f(x) : x \in S_{\mathbb{X}}, g(x) \leq 0 \}.
\end{align*}
\end{theorem}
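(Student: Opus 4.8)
The plan is to prove the two displayed equalities separately, exploiting the symmetry $f \perp_{B} g \Leftrightarrow f \perp_{B} (-g)$ (which holds because $\{f+\lambda(-g):\lambda\in\mathbb{R}\}=\{f+\lambda g:\lambda\in\mathbb{R}\}$). Since $\sup\{f(x):x\in S_{\mathbb{X}},\,g(x)\leq 0\}=\sup\{f(x):x\in S_{\mathbb{X}},\,(-g)(x)\geq 0\}$, this reduces the second equality to the first applied to $-g$. Write $S^{+}=\sup\{f(x):x\in S_{\mathbb{X}},\,g(x)\geq 0\}$. As $f(x)\leq\|f\|$ for every $x\in S_{\mathbb{X}}$, one trivially has $S^{+}\leq\|f\|$, so the entire content is the reverse inequality $S^{+}\geq\|f\|$. (The cases $f=0$ or $g=0$ are immediate, so I assume both are nonzero.)

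I would argue for a contradiction, assuming $S^{+}<\|f\|$. The idea is that pushing $f$ in the direction of $g$ cannot raise the norm, which forces the norm to remain constant along a short segment and thereby violates strict convexity of $\mathbb{X}^{*}$. Concretely, for $\mu>0$, using that $-x\in S_{\mathbb{X}}$ whenever $x\in S_{\mathbb{X}}$, one has $\|f+\mu g\|=\sup_{x\in S_{\mathbb{X}}}(f(x)+\mu g(x))$; splitting this supremum according to the sign of $g(x)$ gives
\[
\|f+\mu g\| = \max\Big(\sup_{x\in S_{\mathbb{X}},\,g(x)\geq 0}(f(x)+\mu g(x)),\ \sup_{x\in S_{\mathbb{X}},\,g(x)\leq 0}(f(x)+\mu g(x))\Big).
\]
On the set $\{g(x)\geq 0\}$ I bound $f(x)+\mu g(x)\leq S^{+}+\mu\|g\|$, while on $\{g(x)\leq 0\}$ one has $f(x)+\mu g(x)\leq f(x)\leq\|f\|$. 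Hence $\|f+\mu g\|\leq\max(S^{+}+\mu\|g\|,\ \|f\|)$, and for every $\mu$ with $0<\mu<(\|f\|-S^{+})/\|g\|$ the right-hand side equals $\|f\|$; thus $\|f+\mu g\|\leq\|f\|$ for all such $\mu$.

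Next I would invoke the hypothesis $f\perp_{B}g$, which gives $\|f+\mu g\|\geq\|f\|$ for every scalar $\mu$. Combined with the previous step, this yields $\|f+\mu g\|=\|f\|$ for all $\mu$ in a nondegenerate interval $(0,\mu_{0})$ with $\mu_{0}=(\|f\|-S^{+})/\|g\|>0$. Fixing any $\mu_{1}\in(0,\mu_{0})$ and setting $a=f/\|f\|$, $b=(f+\mu_{1}g)/\|f\|$, both $a$ and $b$ lie on $S_{\mathbb{X}^{*}}$ and are distinct (as $g\neq 0$), yet their midpoint $\tfrac{1}{2}(a+b)=(f+\tfrac{\mu_{1}}{2}g)/\|f\|$ also has norm one and hence lies on $S_{\mathbb{X}^{*}}$. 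This contradicts the strict convexity of $\mathbb{X}^{*}$, namely that every point of $S_{\mathbb{X}^{*}}$ is an extreme point of $B_{\mathbb{X}^{*}}$. Therefore $S^{+}=\|f\|$, which is the first equality; applying this conclusion to $-g$ in place of $g$ gives the second.

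The step I expect to be the main obstacle is recognizing that the assumption $S^{+}<\|f\|$ forces the norm to be \emph{constant} along the ray $f+\mu g$ (not merely nonincreasing), since it is precisely this flatness that conflicts with strict convexity; the supremum-splitting estimate is the device that manufactures this flatness, and everything else is routine. It is worth noting that this is the functional analogue of Theorem \ref{th.normop}: for $\mathbb{Y}=\mathbb{R}$ the only semi-inner-product is ordinary multiplication, so the conditions $[Ax,y]\geq 0$ and $[Ax,y]\leq 0$ there collapse to the sign conditions $g(x)\geq 0$ and $g(x)\leq 0$ here, while strict convexity of $\mathbb{X}^{*}$ takes over the role played by compactness and reflexivity in guaranteeing the relevant norm-attainment behaviour.
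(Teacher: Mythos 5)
Your proof is correct, and it takes a genuinely different route from the paper. The paper's proof leans on an external characterization theorem (Theorem 2.6 of the cited Sain--Paul--Mal paper): from $f \perp_B g$ and strict convexity of $\mathbb{X}^*$ it extracts norming sequences $\{x_n\}, \{y_n\}$ in $S_{\mathbb{X}}$ with $|f(x_n)| \to \|f\|$, $|f(y_n)| \to \|f\|$ and the sign conditions $f(x_n)g(x_n) \geq 0$, $f(y_n)g(y_n) \leq 0$, then normalizes signs (replacing $x_n$ by $-x_n$ where needed) to see that the two suprema already reach $\|f\|$. Your argument is instead self-contained and by contradiction: if $\sup\{f(x) : x \in S_{\mathbb{X}},\, g(x) \geq 0\} < \|f\|$, your supremum-splitting estimate forces $\|f + \mu g\| \leq \|f\|$ for small $\mu > 0$, which together with $f \perp_B g$ makes the dual norm constant along a nondegenerate segment, producing a non-extreme point of $S_{\mathbb{X}^*}$ and contradicting strict convexity. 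What each buys: the paper's route is short given the cited machinery and matches the sequential technique used throughout the rest of the paper; yours avoids all external results, uses only the definition of Birkhoff--James orthogonality, the symmetry of $S_{\mathbb{X}}$, and the extreme-point formulation of strict convexity, and it makes geometrically transparent exactly where strict convexity enters (ruling out flat pieces of the dual sphere). Your reduction of the second equality to the first via $g \mapsto -g$, and your handling of the degenerate cases $f=0$, $g=0$, are also sound.
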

\begin{proof}
	Since $f\perp_B g$ and   $ \mathbb{X}^{*} $ is strictly convex, by \cite[Th. 2.6]{SPM}, there exist $\{x_n\}$, $\{y_n\} $ in $ S_\mathbb{X}$ such that \\
	$(i)|f(x_n)|\rightarrow \|f\|$ and $ |f(y_n)| \rightarrow \|f\|$ as $n \rightarrow \infty$ \\
	$(ii)f(x_n).g(x_n) \geq 0$ and  $f(y_n).g(y_n) \leq 0$ for all $n\in \mathbb{N}$.\\
	Now, 
	\begin{eqnarray*}
	\|f\|&\geq &  sup\{ f(x).\alpha : x \in S_{\mathbb{X}}, |\alpha|=1, g(x).\alpha \geq 0 \}\\
	&\geq & sup\Big\{f(x_n).\frac{f(x_n)}{|f(x_n)|}:n\in \mathbb{N}\Big\}\\
	&=& sup\{|f(x_n)|:n\in \mathbb{N}\}\\
	&=& \|f\|.
   	\end{eqnarray*}
   Thus, $\| f \| = sup\{ f(x).\alpha  : x \in S_{\mathbb{X}}, |\alpha |=1, g(x).\alpha  \geq 0 \}=sup\{ f(x) : x \in S_{\mathbb{X}}, g(x) \geq 0 \}.$ Similarly, using the property of the sequence $\{y_n\}$ we can show that  $\|f\|= sup\{ f(x) : x \in S_{\mathbb{X}}, g(x) \leq 0 \}.$ This completes the proof of the theorem.
\end{proof}

\begin{theorem}
Let $ \mathbb{X} $ be a normed linear space. Let $ f, g \in \mathbb{X}^{*} $ be such that $ f \perp_{B} g. $ Let  $ \epsilon > 0 $ be arbitrary but fixed after choice. Then $ \| f \| = max~\{ l(\epsilon), k_1 \} = max~\{ l (\epsilon), k_2) \},  $ where,\\
$ l(\epsilon) = sup\{ f(x) : x \in S_{\mathbb{X}}, |g(x)| < \epsilon \}, $ \\
$ k_1 = sup\{ f(x) : x \in S_{\mathbb{X}}, g(x) \geq 0 \}, $ \\
$ k_2 = sup\{ f(x) : x \in S_{\mathbb{X}}, g(x) \leq 0 \}. $
\end{theorem}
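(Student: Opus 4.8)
The plan is to mirror the proof of the preceding bounded-operator $\epsilon$-theorem, specialised to the scalar codomain $\mathbb{Y}=\mathbb{R}$, and to observe that in this one-dimensional target the approximate one-sided conditions collapse to exact sign conditions. I would begin with the trivial inequalities $l(\epsilon)\leq\|f\|$, $k_1\leq\|f\|$, $k_2\leq\|f\|$ (each supremum runs over a subset of $\{f(x):x\in S_{\mathbb{X}}\}$), which at once give $\max\{l(\epsilon),k_1\}\leq\|f\|$ and $\max\{l(\epsilon),k_2\}\leq\|f\|$; the content lies entirely in the reverse inequality. Assuming $f\neq 0$ (the case $f=0$ being trivial) and using $f\perp_B g$ with $f,g\in\mathbb{L}(\mathbb{X},\mathbb{R})$, I would apply \cite[Th 2.4]{SPM} to obtain the dichotomy: either (a) there is a norming sequence $\{x_n\}\subseteq S_{\mathbb{X}}$ for $f$ with $|g(x_n)|\to 0$, or (b) there are norming sequences $\{x_n\},\{y_n\}$ for $f$ and scalars $\epsilon_n,\delta_n\in[0,1)$ with $\epsilon_n,\delta_n\to 0$, $g(x_n)\in (f(x_n))^{+\epsilon_n}$ and $g(y_n)\in (f(y_n))^{-\delta_n}$.

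The key observation, and the only place where one-dimensionality of the codomain enters, is the following scalar collapse: for reals $a\neq 0$, $b$ and any $\epsilon\in[0,1)$ one has $b\in (a)^{+\epsilon}\iff ab\geq 0$ and $b\in (a)^{-\epsilon}\iff ab\leq 0$. Indeed, if $ab<0$ then $a+\lambda b$ vanishes at $\lambda=-a/b>0$, so $\inf_{\lambda\geq 0}|a+\lambda b|=0$, which violates any genuine $\epsilon$-relaxation of $\|a+\lambda b\|\geq\|a\|$ (such a relaxation keeps the right-hand side a strictly positive multiple of $\|a\|=|a|$ for $\epsilon<1$), while $ab\geq 0$ already yields the exact relation $b\in a^{+}$. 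Applying this in case (b) upgrades the approximate conditions to the sharp sign relations $f(x_n)g(x_n)\geq 0$ and $f(y_n)g(y_n)\leq 0$ for all large $n$.

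With these ingredients the two equalities follow. In case (a), eventually $|g(x_n)|<\epsilon$; replacing $x_n$ by $-x_n$ if necessary (which leaves $|g(x_n)|$ unchanged) I may assume $f(x_n)\to\|f\|$, whence $l(\epsilon)\geq\lim f(x_n)=\|f\|$ and so $\max\{l(\epsilon),k_i\}=\|f\|$ for $i=1,2$. In case (b), I take the sequence $\{x_n\}$ and, after the same sign normalisation, assume $f(x_n)\to\|f\|$; then $f(x_n)>0$ eventually forces $g(x_n)\geq 0$, so each such $x_n$ lies in the constraint set defining $k_1$, giving $k_1\geq\|f\|$ and hence $\max\{l(\epsilon),k_1\}=\|f\|$. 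Symmetrically, the sequence $\{y_n\}$ yields $k_2=\|f\|$ and $\max\{l(\epsilon),k_2\}=\|f\|$. The main obstacle is precisely the scalar-collapse step: one must verify that the abstract approximate relation $(\cdot)^{+\epsilon}$ of \cite{SPM}, read in $\mathbb{R}$, degenerates to the sharp inequality $g(x_n)\geq 0$ — this is exactly what replaces the $\epsilon$-dependent quantities $l_2(\epsilon),l_3(\epsilon)$ of the operator theorem by the exact suprema $k_1,k_2$, and it is the only point at which the argument genuinely refines, rather than merely specialises, the bounded-operator proof.
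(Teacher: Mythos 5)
Your proof is correct, and its skeleton --- a dichotomy from \cite{SPM}, the sign-flip normalisation $x_n \mapsto -x_n$, case (a) yielding $l(\epsilon)=\|f\|$, case (b) yielding $k_1=k_2=\|f\|$, combined with the trivial upper bounds --- is the same as the paper's. The genuine difference is the input lemma: the paper cites the functional characterization \cite[Th. 2.7]{SPM}, which already provides sequences satisfying the exact sign conditions $f(x_n)g(x_n)\geq 0$ and $f(y_n)g(y_n)\leq 0$, so no collapse step is needed; you instead start from the operator characterization \cite[Th. 2.4]{SPM} and supply a scalar-collapse lemma ($b\in(a)^{+\epsilon}\iff ab\geq 0$ for real $a\neq 0$, $\epsilon\in[0,1)$, and symmetrically for $(a)^{-\epsilon}$) to recover those sign conditions. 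Your route is longer but more self-contained at the functional level: it exhibits exactly why the $\epsilon$-relaxed constraint sets appearing in the preceding operator theorem degenerate, for a one-dimensional codomain, to the sharp sets defining $k_1$ and $k_2$ --- in effect you re-derive \cite[Th. 2.7]{SPM} from \cite[Th. 2.4]{SPM} --- whereas the paper's route is shorter because that degeneration is already packaged in the cited result. One point to tighten: your collapse argument reasons about ``any genuine $\epsilon$-relaxation'' without quoting the actual definition of $(\cdot)^{+\epsilon}$ in \cite{SPM}; for the definition used there (a lower bound of the form $\sqrt{1-\epsilon^2}\,\|x\|$ with $\epsilon\in[0,1)$) the equivalence is exactly as you claim, and even under a weaker relaxation your argument would survive because $\epsilon_n\to 0$ means the collapse is only needed for small $\epsilon_n$; still, a final write-up should state that definition explicitly and verify the equivalence against it rather than against an unspecified class of relaxations.
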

\begin{proof}
	Since $f\perp_B g,$ by \cite[Th. 2.7]{SPM}, we have, either (a) or (b) holds:\\
	(a) there exists $\{x_n\}$ in $ S_\mathbb{X}$ such that $\mid f(x_n)\mid \rightarrow \|f\| $ and  $g(x_n) \rightarrow 0.$\\
	(b) there exists $\{x_n\}, \{y_n\}$ in $ S_\mathbb{X}$ such that \\
	$(i)|f(x_n)| \rightarrow \|f\|$ and $ |f(y_n)| \rightarrow \|f\|$ as $n \rightarrow \infty$ \\
	$(ii)f(x_n).g(x_n) \geq 0$ and  $f(y_n).g(y_n) \leq 0$ for all $n\in \mathbb{N}$.\\
	Suppose that $(a)$ holds. Since $g(x_n)\to 0,$ there exists $k\in \mathbb{N}$ such that $|g(x_n)|<\epsilon$ for all $n\geq k.$ Now,
	\begin{eqnarray*}
	  \|f\|&\geq&sup\{ f(x). \alpha : x \in S_{\mathbb{X}}, |\alpha |=1, |g(x).\alpha | < \epsilon \}\\
	  &\geq & sup\Big\{f(x_n).\frac{f(x_n)}{|f(x_n)|}:n\geq k\Big\}\\
	  &=& sup\{|f(x_n)|:n\geq k\}\\
	  &=& \|f\|.
	\end{eqnarray*}
    Thus, $\|f\|=sup\{ f(x).\alpha  : x \in S_{\mathbb{X}}, |\alpha |=1, |g(x).\alpha | < \epsilon \}=sup\{ f(x) : x \in S_{\mathbb{X}}, |g(x)| < \epsilon \}=l_1(\epsilon).$ Again, $k_1\leq \|f\|.$ Thus, $max\{l(\epsilon), k_1\}=l(\epsilon)=\|f\|.$ \\
    Now, suppose that $(b)$ holds. Then using similar arguments as in Theorem \ref{th-normfsc}, we obtain, $\|f\|=k_1=k_2.$ Again, $l(\epsilon)\leq \|f\|.$ Thus, $max\{l(\epsilon),k_1\}=max\{l(\epsilon),k_2\}=\|f\|.$ This completes the proof of the theorem.  
\end{proof}

Our next result generalizes Theorem $ 5.2 $ of \cite{W} to the setting of Banach spaces.

\begin{theorem}\label{th-dist}
Let $ \mathbb{X} $ be a reflexive Banach space and $ \mathbb{Y} $ be any Banach space. Let $ T,A \in \mathbb{K}(\mathbb{X},\mathbb{Y}) $ be such that for each $ \lambda \in \mathbb{R}, $ $ M_{T+\lambda A} =  D_{\lambda}\cup (-D_{\lambda}), $ where $ D_{\lambda} $ is a non-empty connected subset of $ S_{\mathbb{X}}. $ Let $ \mathcal{O} $ denote the collection of all s.i.p. on $ \mathbb{Y}. $ Then
\[ dist(T, span\{A\}) = sup\{ [Tx,y] : x \in S_{\mathbb{X}}, y \in S_{\mathbb{Y}}, [~,~] \in \mathcal{O}, [Ax,y]=0 \}. \]
\end{theorem}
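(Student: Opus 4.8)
The plan is to convert the distance problem into a norm problem for a single Birkhoff-James orthogonal pair and then feed that pair into Theorem \ref{th.conn} and the s.i.p. machinery of Theorem \ref{theorem:ortho-sip1}. First I would realize the distance as an attained norm. We may assume $A\neq 0$, since $T\in\mathrm{span}\{A\}$ forces both sides to vanish (on the constraint set $[Ax,y]=0$ we have $[Tx,y]=\lambda_{0}[Ax,y]=0$) and is handled separately. Then $\mathrm{span}\{A\}$ is a one-dimensional, hence finite-dimensional and proximinal, subspace of $\mathbb{K}(\mathbb{X},\mathbb{Y})$, so there is $\lambda_{0}\in\mathbb{R}$ with $\|T-\lambda_{0}A\|=\mathrm{dist}(T,\mathrm{span}\{A\})$. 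Put $S:=T-\lambda_{0}A\in\mathbb{K}(\mathbb{X},\mathbb{Y})$, and note $\|S\|>0$. A one-line substitution $\mu=\lambda_{0}-\lambda$ shows that minimality of $\|T-\lambda_{0}A\|$ over all $\lambda$ is precisely $\|S+\mu A\|\geq\|S\|$ for every $\mu$, i.e. $S\perp_{B}A$. Since $S=T+(-\lambda_{0})A$, the hypothesis applied with $\lambda=-\lambda_{0}$ gives $M_{S}=D_{-\lambda_{0}}\cup(-D_{-\lambda_{0}})$ with $D_{-\lambda_{0}}$ a non-empty connected subset of $S_{\mathbb{X}}$, so the pair $S,A$ meets the hypotheses of Theorem \ref{th.conn}.

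Next comes the bookkeeping step that lets me replace $T$ by $S$ inside the supremum. By linearity of the s.i.p. in its first slot, for any admissible triple with $[Ax,y]=0$ we have $[Tx,y]=[Sx,y]+\lambda_{0}[Ax,y]=[Sx,y]$. Hence the right-hand side of the claimed identity equals $\sup\{[Sx,y]:x\in S_{\mathbb{X}},\,y\in S_{\mathbb{Y}},\,[~,~]\in\mathcal{O},\,[Ax,y]=0\}$, and it suffices to prove this supremum equals $\|S\|=\mathrm{dist}(T,\mathrm{span}\{A\})$. The upper bound is immediate: for every admissible triple, the Cauchy-Schwarz inequality (axiom $(c)$) together with compatibility gives $[Sx,y]\leq|[Sx,y]|\leq\|Sx\|\,\|y\|\leq\|S\|$, so the supremum is at most $\|S\|$ even before imposing the constraint.

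The heart of the argument is the attainment of this bound. Applying Theorem \ref{th.conn} to $S\perp_{B}A$ produces some $x\in M_{S}$ with $Sx\perp_{B}Ax$ in $\mathbb{Y}$. Since $Sx\perp_{B}Ax$, \cite[Prop. 5.3]{DK} yields a s.i.p. $[~,~]\in\mathcal{O}$ on $\mathbb{Y}$ with $[Ax,Sx]=0$. Setting $y:=Sx/\|Sx\|\in S_{\mathbb{Y}}$ and using axiom $(d)$ together with $x\in M_{S}$, I obtain $[Ax,y]=\frac{1}{\|Sx\|}[Ax,Sx]=0$ and $[Sx,y]=\frac{1}{\|Sx\|}[Sx,Sx]=\|Sx\|=\|S\|$. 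Thus the triple $(x,y,[~,~])$ is admissible and shows the supremum is at least $\|S\|$, which combined with the upper bound gives equality and completes the proof.

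The main obstacle is not computational but structural, and it sits in two places. First, one must check that $S=T-\lambda_{0}A$ still satisfies the hypotheses of Theorem \ref{th.conn}; this is exactly why the theorem demands the norm-attainment form $M_{T+\lambda A}=D_{\lambda}\cup(-D_{\lambda})$ for \emph{every} $\lambda$ rather than for $T$ alone, since the best approximation $\lambda_{0}$ is not known in advance. Second, one must extract from $Sx\perp_{B}Ax$ a s.i.p. realizing \emph{exact} orthogonality $[Ax,Sx]=0$; this is precisely the point where Theorem \ref{th.normop} (which only delivers $[Ax,y]\geq 0$ or $\leq 0$) is insufficient, and one genuinely needs to pass through a single Birkhoff-James orthogonal pair in $\mathbb{Y}$ via Theorem \ref{th.conn} and \cite[Prop. 5.3]{DK}.
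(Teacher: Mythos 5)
Your proof is correct and takes essentially the same route as the paper's: the paper likewise produces $\lambda_0$ with $(T+\lambda_0 A)\perp_B A$ (via James' theorem rather than proximinality of a finite-dimensional subspace, an immaterial difference), applies Theorem \ref{th.conn} to obtain $u\in M_{T+\lambda_0 A}$ with $(T+\lambda_0 A)u\perp_B Au$, invokes \cite[Prop. 5.3]{DK} to get a s.i.p.\ with $[Au,(T+\lambda_0 A)u]=0$, and evaluates at $y=(T+\lambda_0 A)u/\|(T+\lambda_0 A)u\|$, exactly as you do with $S=T-\lambda_0 A$. Your explicit handling of the degenerate case $T\in\mathrm{span}\{A\}$ (where the paper's proof would divide by $\|(T+\lambda_0A)u\|=0$) is a small refinement the paper omits.
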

\begin{proof}
     By \cite[Th. 2.3]{J}, there exists $\lambda_0 \in \mathbb{R}$ such that $(T+\lambda_0 A) \perp_B A.$ Thus, $\|T+\lambda A\|\geq \|T+\lambda_0 A\|$ for all $\lambda \in \mathbb{R}.$ Therefore, $dist(T,span{A})=\|T+\lambda_0 A\|.$ Since $M_{T+\lambda_0 A}= D_{\lambda_0}\cup (-D_{\lambda_0}),$ where $D_{\lambda_0}$ is a non-empty connected subset of $S_{\mathbb{X}},$ by Theorem \ref{th.conn}, there exists $u\in M_{T+\lambda_0 A},$ such that $(T+\lambda_0 A)u\perp_B Au.$ Now, by \cite[Prop. 5.3]{DK}, there exists a s.i.p. $[~,~]\in \mathcal{O}$ such that $[Au,(T+\lambda_0 A)u]=0.$ Hence,
     \begin{eqnarray*}
     dist(T,span\{A\})&=& \|T+\lambda_0 A\|\\
     &\geq &  sup\{ [(T+\lambda_0 A)x,y] : x \in S_{\mathbb{X}}, y \in S_{\mathbb{Y}}, [~,~] \in \mathcal{O}, [Ax,y]=0 \}\\
     &\geq & \Big[(T+\lambda_0 A)u,\frac{(T+\lambda_0 A)u}{\|(T+\lambda_0 A)u\|}\Big]\\
     &=& \|(T+\lambda_0 A)u\|\\
     &=& \|T+\lambda_0 A\|.
     \end{eqnarray*}
 Thus, 
 \begin{eqnarray*}
 dist(T,span\{A\})&=&sup\{ [(T+\lambda_0 A)x,y] : x \in S_{\mathbb{X}}, y \in S_{\mathbb{Y}}, [~,~] \in \mathcal{O}, [Ax,y]=0 \}\\
 &=& sup\{ [Tx,y] : x \in S_{\mathbb{X}}, y \in S_{\mathbb{Y}}, [~,~] \in \mathcal{O}, [Ax,y]=0 \}.
 \end{eqnarray*}
 This completes the proof of the theorem.
\end{proof}
\begin{remark}
Let us note that Theorem $ 5.2 $ of \cite{W} can be deduced from Theorem \ref{th-dist}, by observing that in case of a Hilbert space, the usual inner product is the only s.i.p. on the space. Moreover, for $ T, A \in \mathbb{K}(\mathbb{H}_1,\mathbb{H}_2), $ where $ \mathbb{H}_1, \mathbb{H}_2 $ are Hilbert spaces, $ M_{T+\lambda A} =  D_{\lambda}\cup (-D_{\lambda}), $ where $ D_{\lambda} $ is a non-empty connected subset of $ S_{\mathbb{H}_1}. $
\end{remark}

In Theorem $ 5.3 $ of \cite{W}, the author has presented Kolmogorov's type characterization of best approximation of a compact linear operator between  Hilbert spaces, in a finite-dimensional subspace. However, our next example illustrates that Theorem $ 5.3 $ of \cite{W} is incorrect.

\begin{example}
Let $ \mathbb{H}_1 = \mathbb{H}_2 = (\mathbb{R}^{3}, \|\|_{2}).  $ Let us consider $ T, A_1, A_2 \in \mathbb{L}(\mathbb{H}_1,\mathbb{H}_2) $ given by:\\
$ T(1,0,0)=(1,0,0),~T(0,1,0)=(0,0,0),~T(0,0,1)=(0,0,0), $\\
$ A_1(1,0,0)=(0,1,0),~A_1(0,1,0)=(1,0,0),~A_1(0,0,1)=(0,1,0), $\\
$ A_2(1,0,0)=(1,0,0),~A_2(0,1,0)=(0,1,0),~A_2(0,0,1)=(1,0,0). $\\

\noindent Let $ \mathcal{Z} = span\{A_1,A_2\}. $ We claim that the following holds:\\
$ dist(T,\mathcal{Z}) < sup\{ |\langle Tx,y \rangle| : x \in S_{\mathbb{H}_1}, y \in S_{\mathbb{H}_2}, B \in \mathcal{Z}, y \perp Bx\}. $\\

\noindent Indeed, it is easy to check that the following are true:\\
$ (1)~ \| T \| = 1 $ and $ M_T = \{ \pm (1,0,0) \}, $\\
$ (2)~ T \perp_{B} A_1 $ and $ T \not\perp_B A_2, $\\
$ (3)~ T \notin \mathcal{Z}. $\\

\noindent Now, we have, $ (1,0,0) \perp A_1(1,0,0). $ Therefore, choosing $ x=y=(1,0,0), $ we have,\\
R.H.S $ \geq | \langle T(1,0,0), (1,0,0) \rangle | = 1. $\\
On the other hand, since $ T \not\perp_B A_2, $ we have, $ T \not\perp_B \mathcal{Z}. $ In other words, we have, L.H.S $ < \| T \| = 1. $ This completes the proof of our claim. 
\end{example} 

Let us now present a correct formulation of Theorem $ 5.3 $ of \cite{W}, in the setting of Banach spaces.

\begin{theorem}\label{th-distsub}
Let $ \mathbb{X} $ be a reflexive Banach space and $ \mathbb{Y} $ be any Banach space. Let $ \mathcal{Z} $ be a finite-dimensional subspace of $ \mathbb{K}(\mathbb{X},\mathbb{Y}). $ Let $ T \in \mathbb{K}(\mathbb{X},\mathbb{Y}) \setminus \mathcal{Z}. $ Let us further assume that for any $ \lambda \in \mathbb{R} $ and for any $ A \in \mathcal{Z}, $ $ M_{T+\lambda A} =  D_{\lambda,A}\cup (-D_{\lambda,A}) , $ where $ D_{\lambda,A} $ is a non-empty connected subset of $ S_{\mathbb{X}}. $ Then there exists $ A_0 \in \mathcal{Z} $ such that
\[ dist(T,\mathcal{Z})= sup\{ [Tx,y] : x \in S_{\mathbb{X}}, y \in S_{\mathbb{Y}}, [A_0x,y]=0 \}. \]
Moreover, $ A_0 $ is the best approximation to $ T $ in $ \mathcal{Z}. $ 
\end{theorem}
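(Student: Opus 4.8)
The plan is to reduce the statement to Theorem~\ref{th-dist}, applied not to $T$ directly but to the pair $(T-A_0,A_0)$, where $A_0$ is a best approximation to $T$ in $\mathcal{Z}$.

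First I would secure the existence of $A_0$. Since $\mathcal{Z}$ is a finite-dimensional subspace of the normed linear space $\mathbb{K}(\mathbb{X},\mathbb{Y})$, it is proximinal, so there is some $A_0\in\mathcal{Z}$ with $\|T-A_0\|=dist(T,\mathcal{Z})$. Put $T_0:=T-A_0$; as $T\notin\mathcal{Z}$ and $A_0\in\mathcal{Z}$, we have $T_0\in\mathbb{K}(\mathbb{X},\mathbb{Y})$ and $T_0\neq0$, so $\|T_0\|>0$. The key elementary observation is that, because $\mu A_0\in\mathcal{Z}$ for every $\mu\in\mathbb{R}$ and $A_0$ is optimal in all of $\mathcal{Z}$, one has $\|T-\mu A_0\|\ge\|T-A_0\|=\|T_0\|$ for every $\mu$. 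Writing $\mu=1-\lambda$, this reads $\|T_0+\lambda A_0\|\ge\|T_0\|$ for all $\lambda$, i.e. $T_0\perp_B A_0$, and at the same time it yields $dist(T_0,span\{A_0\})=\inf_{\mu}\|T-\mu A_0\|=\|T_0\|=dist(T,\mathcal{Z})$.

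Next I would apply Theorem~\ref{th-dist} to $T_0$ and $A_0$. Its hypothesis concerns only the norm attainment sets, and it transfers: for each $\lambda\in\mathbb{R}$ we have $M_{T_0+\lambda A_0}=M_{T+(\lambda-1)A_0}$, which by assumption (with $A=A_0$) equals $D_{\lambda-1,A_0}\cup(-D_{\lambda-1,A_0})$ for a non-empty connected $D_{\lambda-1,A_0}\subseteq S_{\mathbb{X}}$. Theorem~\ref{th-dist} then gives
\[ dist(T_0,span\{A_0\})=\sup\{[T_0x,y]:x\in S_{\mathbb{X}},\,y\in S_{\mathbb{Y}},\,[~,~]\in\mathcal{O},\,[A_0x,y]=0\}. \]
Combining this with the identification of distances from the previous paragraph shows that $dist(T,\mathcal{Z})$ equals this supremum. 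Finally, to replace $[T_0x,y]$ by $[Tx,y]$ I invoke linearity of the s.i.p. in its first slot (property $(a)$): for all $x,y$ and every $[~,~]\in\mathcal{O}$, $[Tx,y]=[(T-A_0)x,y]+[A_0x,y]=[T_0x,y]+[A_0x,y]$, so on the constraint set $\{[A_0x,y]=0\}$ the two objective functions coincide. Hence the supremum is unchanged, which gives the displayed formula, and the ``moreover'' assertion is immediate since $A_0$ was chosen as a best approximation at the outset.

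The genuinely routine pieces are the proximinality of $\mathcal{Z}$ and the s.i.p.\ linearity manipulation. The step that requires care — and which is the conceptual point underscored by the preceding counterexample — is that Theorem~\ref{th-dist} by itself computes only $dist(T_0,span\{A_0\})$, the distance to the \emph{line} through $A_0$; one must exploit that $A_0$ is optimal in the \emph{entire} subspace $\mathcal{Z}$ (not merely in $span\{A_0\}$) in order to collapse this to $dist(T,\mathcal{Z})$. This is exactly why the correct formula must be anchored at a single best approximant $A_0$, rather than written as a supremum ranging over all of $\mathcal{Z}$ in the manner of a naive Kolmogorov-type criterion.
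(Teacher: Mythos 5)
Your proposal is correct and follows essentially the same route as the paper: use proximinality of the finite-dimensional subspace $\mathcal{Z}$ to obtain a best approximant $A_0$, identify $dist(T,\mathcal{Z})$ with $dist(T,span\{A_0\})$, and invoke Theorem~\ref{th-dist}, whose hypothesis is supplied by the case $A=A_0$ of the assumption in Theorem~\ref{th-distsub}. The only difference is that the paper applies Theorem~\ref{th-dist} directly to the pair $(T,A_0)$, so your detour through $T_0=T-A_0$ (and the subsequent s.i.p.-linearity step converting $[T_0x,y]$ back to $[Tx,y]$ on the set where $[A_0x,y]=0$) is correct but unnecessary.
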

\begin{proof}
	Since $\mathcal{Z}$ is a finite-dimensional subspace of $ \mathbb{K}(\mathbb{X},\mathbb{Y}), $ there exists $A_0\in \mathcal{Z}$ such that $dist(T,\mathcal{Z})=dist (T,span\{A_0\})=\|T-A_0\|.$ Thus, by Theorem \ref{th-dist}, we have, 
	\[dist(T,\mathcal{Z})=sup\{ [Tx,y] : x \in S_{\mathbb{X}}, y \in S_{\mathbb{Y}}, [A_0x,y]=0 \}. \]  
\end{proof}

In the setting of Hilbert spaces, Theorem \ref{th-distsub} assumes a simpler form:

\begin{theorem}
Let $ \mathbb{H}_1,\mathbb{H}_2 $ be Hilbert spaces and $ \mathcal{Z} $ be a finite-dimensional subspace of $ \mathbb{K}(\mathbb{H}_1,\mathbb{H}_2). $ Let $ T \in \mathbb{K}(\mathbb{H}_1,\mathbb{H}_2) \setminus \mathcal{Z}. $ Then there exists $ A_0 \in \mathcal{Z} $ such that
\[ dist(T,\mathcal{Z})= sup\{ [Tx,y] : x \in S_{\mathbb{X}}, y \in S_{\mathbb{Y}}, [A_0x,y]=0 \}. \]
Moreover, $ A_0 $ is the best approximation to $ T $ in $ \mathcal{Z}. $  
\end{theorem}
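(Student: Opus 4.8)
The plan is to derive this Hilbert space result as an immediate specialization of Theorem \ref{th-distsub}. The only gap to bridge is the hypothesis appearing in Theorem \ref{th-distsub} concerning the norm attainment sets: for every $\lambda \in \mathbb{R}$ and every $A \in \mathcal{Z}$, one needs $M_{T+\lambda A} = D_{\lambda,A} \cup (-D_{\lambda,A})$ with $D_{\lambda,A}$ a non-empty connected subset of $S_{\mathbb{H}_1}$. In the Hilbert space setting this hypothesis is automatically satisfied and need not be assumed, which is precisely why the statement here carries no such assumption. So the first step I would take is to verify that every compact operator between Hilbert spaces has norm attainment set of this special form.

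For this verification I would invoke \cite[Th. 2.2]{SP}, exactly as was done in the proof of Corollary \ref{cor.conn}. Fix $\lambda \in \mathbb{R}$ and $A \in \mathcal{Z}$; since $\mathcal{Z} \subseteq \mathbb{K}(\mathbb{H}_1,\mathbb{H}_2)$ and $T \in \mathbb{K}(\mathbb{H}_1,\mathbb{H}_2)$, the operator $T + \lambda A$ is again compact, and $\mathbb{H}_1$ is reflexive, so $M_{T+\lambda A} \neq \emptyset$. By \cite[Th. 2.2]{SP}, for a compact operator on a Hilbert space either $M_{T+\lambda A}$ is connected or $|M_{T+\lambda A}| = 2$; in either case it can be written as $D_{\lambda,A} \cup (-D_{\lambda,A})$ for a non-empty connected $D_{\lambda,A} \subseteq S_{\mathbb{H}_1}$ (in the two-point case $D_{\lambda,A}$ is a singleton, which is connected). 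This is the one substantive point, and it is genuinely routine given the cited result.

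Having established the hypothesis, I would then simply apply Theorem \ref{th-distsub} with $\mathbb{X} = \mathbb{H}_1$ and $\mathbb{Y} = \mathbb{H}_2$ (both reflexive Banach spaces, with $\mathbb{H}_2$ an arbitrary Banach space as required), yielding an $A_0 \in \mathcal{Z}$ with the claimed distance formula and with $A_0$ a best approximation to $T$ in $\mathcal{Z}$. As a final cosmetic remark I would note that on a Hilbert space the unique compatible s.i.p. is the usual inner product, so the bracket $[~,~]$ in the displayed formula is just $\langle~,~\rangle$ and the orthogonality condition $[A_0 x, y] = 0$ reads $A_0 x \perp y$; this is the sense in which the result "assumes a simpler form."

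I do not anticipate any real obstacle, since this is a corollary that packages Theorem \ref{th-distsub} together with the structural fact about norm attainment sets of compact Hilbert space operators. The only thing to watch is bookkeeping: making sure $T + \lambda A$ is compact for the full range of $\lambda$ and $A \in \mathcal{Z}$ so that \cite[Th. 2.2]{SP} applies uniformly, and keeping the index notation $D_{\lambda,A}$ consistent with that of Theorem \ref{th-distsub}. The hardest conceptual ingredient — the characterization of Birkhoff-James orthogonality via a norm-attaining vector and the passage to the s.i.p. formulation — has already been absorbed into the earlier theorems, so nothing new needs to be proved here.
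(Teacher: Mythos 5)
Your proposal is correct and follows essentially the same route as the paper: verify that the norm-attainment hypothesis of Theorem \ref{th-distsub} is automatically satisfied for compact operators between Hilbert spaces, and then apply Theorem \ref{th-distsub} directly. The only minor difference is one of citation precision: since \cite[Th. 2.2]{SP} concerns operators on a single Hilbert space, the paper states that $ M_{T+\lambda A} $ is the unit sphere of some subspace of $ \mathbb{H}_1 $ by ``following the same method of the proof'' of that result rather than by quoting it verbatim for operators in $ \mathbb{K}(\mathbb{H}_1,\mathbb{H}_2) $, whereas you invoke it directly; the underlying argument and the conclusion $ M_{T+\lambda A} = D_{\lambda,A} \cup (-D_{\lambda,A}) $ are the same.
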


\begin{proof}
    Following the same method of the proof of \cite[Th. 2.2]{SP}, it can be observed that $M_{T+\lambda A}$ is the unit sphere of some subspace of $\mathbb{H}_1$ for all $\lambda \in \mathbb{R},$ when $T,A\in \mathbb{K}(\mathbb{H}_1,\mathbb{H}_2).$ Therefore, for any $ \lambda \in \mathbb{R} $ and for any $ A \in \mathcal{Z}, $ $ M_{T+\lambda A} =  D_{\lambda,A} \cup (-D_{\lambda,A}), $ where $ D_{\lambda,A} $ is a non-empty connected subset of $ S_{\mathbb{H}_1}. $ Thus, by Theorem \ref{th-distsub}, we obtain the desired result.
\end{proof}

\bibliographystyle{amsplain}

\end{document}